\newtheorem{thm}{Theorem}
\newtheorem{prop}[thm]{Proposition}
\newtheorem{lm}[thm]{Lemma}
\newtheorem{coro}[thm]{Corollary}
\theoremstyle{definition}
\newtheorem{rmq}[thm]{Remark}
\newtheorem{ex}{Example}
\numberwithin{thm}{section}
\numberwithin{equation}{section}
\newcommand{\s}[1]{\sigma_{ext}(#1)}
\newcommand{\lh}{\mathcal{B}(\mathcal{H})}
\title[Extended spectrum of quasi-normal operators]{Extended spectrum and extended eigenspaces of quasi-normal operators}
\author{GILLES CASSIER}
\address{Universit\'e de Lyon; Universit\'e Lyon 1; Institut Camille Jordan CNRS UMR 5208; 43, boulevard du 11 Novembre 
1918, F-69622 Villeurbanne}
\email{cassier@math.univ-lyon1.fr}
\author{HASAN ALKANJO}
\email{alkanjo@math.univ-lyon1.fr}
\begin{document}

\begin{abstract}
We say that a complex number $\lambda$ is an extended eigenvalue of a bounded linear operator $T$
on a Hilbert space $\mathcal{H}$ if there exists a nonzero bounded linear operator $X$ acting on $\mathcal{H}$,
called extended eigenvector associated to $\lambda$, and satisfying the equation $TX=\lambda XT$.
In this paper we describe the sets of extended eigenvalues and extended eigenvectors for the quasi-normal operators.
\bigskip

\flushleft {2010} Mathematics Subject Classification: Primary 47B20, 47A75, 47A80. Secondary 47A25, 47A60.\\
Keywords and phrases: Extended Eigenvalues, Extended Eigenspaces, Intertwining values, 
$\lambda$-intertwining operators, normal operators, quasi-normal operators.
\end{abstract}
\maketitle
\begin{section}{Introduction And Preliminaries}
Let $\mathcal{H}$ be a separable complex Hilbert space, and denote by $\lh$ the algebra of 
all bounded linear operators on $\mathcal{H}$. If $T$ is an operator in $\lh$, then a complex number $\lambda$ 
is an extended eigenvalue of $T$ if there is a nonzero operator $X$ such that $TX=\lambda XT$. 
We denote by the symbol $\sigma_{ext}(T)$ the set of extended eigenvalues of $T$.
The subspace generated by extended eigenvectors corresponding to $\lambda$ will be denoted by $E_{ext}(T,\lambda)$. 

The concepts of extended eigenvalue and extended eigenvector are closely related with generalization of famous Lomonosov's
theorem on existence of non-trivial hyperinvariant subspace for the compact operators on a Banach space, which were 
done by S. Brown in \cite{brown}, and Kim, Moore and Pearcy in \cite{prcy}, and is stated as follows :

\textit{If an operator T on a Banach space has a non-zero compact eigenvector, then T has a nontrivial hyperinvariant
subspace.}

The special case, where $T$ commutes with a non-zero compact operator, is the original theorem of Lomonosov \cite{lmnsv}.

Extended eigenvalues and their corresponding extended eigenvectors were studied by several authors 
(see for example \cite{hsn}, \cite{volt1}, \cite{casalk}, \cite{kar} and \cite{lamb}).

In \cite{volt1}, Biswas, Lambert and Petrovic have introduced this notion and they have shown that 
$\s{V}=]0,+\infty[$ where $V$ is the well-known integral Volterra operator on the space $L^2[0,1]$. 
In \cite{kar}, Karaev gave a complete description of the set of extended eigenvectors of $V$.

Recently, in \cite{shkarin}, Shkarin has shown that there is a compact quasinilpotent 
operator $T$ for which $\s{T}=\{1\}$, 
that which allows to classify this type of operators.

In \cite{casalk}, the authors has given an accurate and practical description of the set of 
extended eigenvectors of normal operators.

In this paper we treat a more generalize class of operators, that is the quasinormal operators.

In section $2$ we introduce the sets of intertwining values of a couple of operators 
and $\lambda$-intertwining operators associated with a couple of operators and an intertwining value. We give
a complete description of the set of intertwining values associated with a quasinormal operator 
and a operator of the form $A \otimes S$ where $A$ is an injective positive operator and $S$ is the usual
forward shift on the Hardy space $H^2$. This is the main result of the paper and it is used several times
in the sequel. In particular, we apply this result in order to describe the extended spectrum of a pure quasinormal
operator.

In section $3$, Theorem \ref{subnl} gives a description of extended eigenvectors for any injective 
subnormal operator. In particular,
we give a description of extended eigenvectors related to the canonical decomposition of a subnormal operator in 
sum of normal and pure subnormal operators. 

Section $4$ is devoted to the complete description of the extended eigenvalues and the extended eigenspaces of a general
quasinormal operator.

In section $5$, we generalize a theorem of Yoshino which gives a necessary and sufficient 
condition that an operator commuting with a quasinormal operator 
have an extension commuting with the normal extension of the quasinormal operator. 
In particular, we generalize this to operators intertwining two quasinormal operators, such a result which will 
give us the relationship between extended eigenvectors of some pure quasinormal operators and 
their minimal normal extensions.

\end{section}

\begin{section}{Intertwining values and $\lambda$-intertwining operators of quasi-normal operators}
In this section, we characterize the set of extended eigenvalues of a quasi-normal operator. 
Recall that an operator $T\in\lh$ is quasi-normal if it commutes with its modulus $|T|:=(T^*T)^{1/2}$, i.e., $T|T|=|T|T$.
Furthermore, $T$ is pure if it has no reducing subspaces $\mathcal{M}\neq\{0\}$ such that $T|_{\mathcal{M}}$ is normal.
Since the normal operators have been accomplished in \cite{casalk}, we will focus in this 
section on the case of pure quasi-normal operators.
First we will show some auxiliary results.
\begin{prop}\label{sm}
 Let $T_1,T_2\in\lh$, then $\s{T_1}\s{T_2}\subset\s{T_1\otimes T_2}$, where $T_1\otimes T_2$ is the tensor product of
$T_1$ and $T_2$.
\end{prop}
\begin{proof}
Let $\lambda_i\in\s{T_i}$ and $X_i\in E_{ext}(T_i,\lambda_i)\backslash\{0\}$, $i=1,2$. If we consider $X:=X_1\otimes X_2$,
then $X$ is a nonzero operator in $E_{ext}(T_1\otimes T_2,\lambda_1\lambda_2)$, which implies 
\[
 \lambda_1\lambda_2\in\s{T_1\otimes T_2}.
\]
\end{proof}
Now, if we denote by $S$ the unilateral shift (which we suppose that it is acting on Hardy space $H^2$), then A. 
Brown proved the following theorem (see \cite{brownq}).
\begin{thm}\label{brown}
 An operator $T\in\lh$ is a pure quasinormal operator if and only if there is an injective positive operator $A$ on 
a Hilbert space $\mathfrak{L}$ such that $T$ is unitarily equivalent to $A\otimes S$, acting on $\mathfrak{L}\otimes H^2$.
\end{thm}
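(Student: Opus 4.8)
The plan is to prove the two implications separately, handling the \emph{if} direction by direct computation and the \emph{only if} direction through the interplay between the polar decomposition and the Wold decomposition of isometries.

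For the \emph{if} direction, assume $T = A \otimes S$ with $A$ injective and positive. Since $S^*S = I$ and $A = A^*$, one computes $T^*T = A^2 \otimes I$, so $|T| = A \otimes I$; then $T|T| = A^2 \otimes S = |T|T$, which proves quasinormality. For purity, suppose toward a contradiction that a nonzero subspace $\mathcal{M}$ reduces $T$ with $T|_{\mathcal{M}}$ normal. Using that $|T| = A \otimes I$, that $S$ is a pure isometry (it admits no unitary summand), and that $A$ is injective, I would argue that normality of $T|_{\mathcal{M}}$ forces the isometric factor to possess a unitary part on a nonzero subspace, which $S$ does not have.

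For the \emph{only if} direction, let $T$ be pure quasinormal. First I observe that $\ker T$ reduces $T$: indeed $\ker T = \ker |T|$, and since $|T|$ is self-adjoint and commutes with $T$ it also commutes with $T^*$, so $T^*(\ker T) \subseteq \ker T$; as $T|_{\ker T} = 0$ is normal, purity forces $\ker T = \{0\}$. Hence in the polar decomposition $T = U|T|$ the factor $U$ is an isometry and $P := |T|$ is injective and positive, with $UP = PU$ by quasinormality (and therefore $U^*P = PU^*$). I then invoke the Wold decomposition $U = U_u \oplus U_s$. Since $P$ commutes with both $U$ and $U^*$, it leaves each $U^n\mathcal{H}$ invariant and thus reduces the unitary subspace $\mathcal{H}_u = \bigcap_n U^n\mathcal{H}$, so $\mathcal{H}_u$ reduces $T = UP$; on $\mathcal{H}_u$ the commuting factors $U$ (unitary) and $P$ (positive) make $T|_{\mathcal{H}_u}$ normal, and purity gives $\mathcal{H}_u = \{0\}$. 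Thus $U$ is a pure isometry, unitarily equivalent to $I_{\mathfrak{L}} \otimes S$ on $\mathfrak{L} \otimes H^2$ for a suitable multiplicity space $\mathfrak{L}$.

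The main obstacle is the final identification of $P$. Since $P$ commutes with $I_{\mathfrak{L}} \otimes S$, it lies in the commutant of the shift, which, viewing $\mathfrak{L}\otimes H^2$ as the vector-valued Hardy space, consists of the analytic Toeplitz operators with symbol in $H^\infty\!\big(\mathcal{B}(\mathfrak{L})\big)$. The decisive point is that a self-adjoint analytic Toeplitz operator necessarily has a constant symbol, so $P = A \otimes I$ with $A = A^* \geq 0$ on $\mathfrak{L}$, and injectivity of $P$ yields injectivity of $A$. Finally $T = UP = (I_{\mathfrak{L}}\otimes S)(A \otimes I) = A \otimes S$, giving the desired unitary equivalence. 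I expect the delicate step to be precisely the justification that self-adjointness forces the Toeplitz symbol to reduce to a constant, that is, that the only positive operators commuting with the shift are of the form $A \otimes I$.
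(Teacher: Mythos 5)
Your proof is sound, but note first that the paper does not actually prove this statement: Theorem \ref{brown} is quoted as A.~Brown's classical result with a citation to \cite{brownq}, so the only comparison available is with the classical argument, and your route --- injectivity of a pure quasinormal operator via the reducing kernel, polar decomposition $T=U|T|$ with $U$ an isometry commuting with $|T|$, Wold decomposition of $U$ with $|T|$ reducing the unitary part, and identification of $|T|$ inside the commutant of the shift --- is precisely the standard modern proof of Brown's theorem. Every step you assert is true; two of them, however, are left as promissory notes and should be written out. For the purity of $A\otimes S$, instead of the vague appeal to ``a unitary part of the isometric factor,'' compute the self-commutator directly: $T^*T-TT^*=A^2\otimes(I-SS^*)\geq 0$. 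If $\mathcal{M}\neq\{0\}$ reduces $T$ with $T|_{\mathcal{M}}$ normal, then $(T^*T-TT^*)x=0$ for every $x\in\mathcal{M}$, so writing $x=(x_0,x_1,\ldots)$ in $\oplus_{k}\mathfrak{L}$ we get $A^2x_0=0$, hence $x_0=0$ by injectivity of $A$; applying the same to $T^{*n}x\in\mathcal{M}$, whose zeroth coefficient is $A^nx_n$, kills every coefficient and gives $\mathcal{M}=\{0\}$. For the step you call delicate, you do not need the $H^\infty\bigl(\mathcal{B}(\mathfrak{L})\bigr)$ description of the commutant of the shift at all: writing $P=(P_{i,j})_{i,j\geq 0}$ as a matrix over $\oplus_k\mathfrak{L}$, the relation $P(I\otimes S)=(I\otimes S)P$ reads $P_{i,j+1}=P_{i-1,j}$ for $i\geq 1$ and $P_{0,j+1}=0$, so $P$ is a lower-triangular Toeplitz matrix $P_{i,j}=Q_{i-j}$ with $Q_m=0$ for $m<0$; self-adjointness gives $Q_m=Q_{-m}^{*}=0$ for $m>0$, leaving $P=Q_0\otimes I$ --- an elementary computation of exactly the kind the paper itself uses in the proof of Theorem \ref{eigvect_pure_qn}.

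One further small point: your parenthetical ``$UP=PU$ by quasinormality'' silently uses the injectivity you established just before. The identity $T|T|=|T|T$ only reads $UP^2=PUP$, i.e.\ $(UP-PU)P=0$, and one concludes $UP=PU$ because the injective positive operator $P$ has dense range; without $\ker T=\{0\}$ this deduction fails, so the order of your steps is essential and worth making explicit.
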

\begin{rmq}\label{smbl}
The two following remarks will be frequently used in the sequel.\\%\begin{enumerate}
1) Let $T=V_T|T| \in \mathcal{B}(\mathcal{H})$ be the polar decomposition of a pure quasinormal operator $T$. 
The subspace
$\mathfrak{L}_T=\mathcal{H}\ominus V_T\mathcal{H} $ is invariant by $|T|$ and we can choose the positive operator $A$ in
the above theorem by setting $A:=A_T=|T|_{\rvert}\mathfrak{L}_T$. In this case we will denote by 
$U_T \in \mathcal{B} (\mathcal{H}, \mathfrak{L}_T\otimes H^2) $ the unitary operator
such that $A_T\otimes S=U_TTU_T^{\ast}$. Proposition \ref{sm} and Theorem \ref{brown} already show that
$\s{A_T}\cdot \mathbb{D}^{c} \subseteq \s{T}$. We will frequently identify the space $\mathfrak{L}_T\otimes H^2$ with 
the space $\oplus_{k=0}^{\infty} \mathfrak{L}_T$.\\
2) Let $\mathcal{H}$ be a Hilbert space, and $T\in\lh$. Suppose that there exist $X,U\in\lh$, with 
$U$ is an invertible operator, such that $T=U^{-1}SU$. Then, one can easily verify that
$\s{T}=\s{S}$. Moreover, for all $\lambda\in\s{T}$, $E_{ext}(T,\lambda)=UE_{ext}(S,\lambda)U^{-1}$. 
%\end{enumerate}
\end{rmq}
%As a consequence, it is convenient to describe the extended spectrum and extended eigenspaces for the operator $A\otimes S$,
%where $A$ is an injective positive operator. To this purpose, 
To our purpose, we introduce the following useful sets of operators.
Let $(A,B) \in \mathcal{B}(\mathcal{H}_1)\times \mathcal{B}(\mathcal{H}_2)$ and $r\in\mathbb{R}_+^*$,
we define $ \mathcal{A}_r(A,B)$ by setting
\begin{align*}
 \mathcal{A}_r(A,B)&=\{L\in\mathcal{B}(\mathcal{H}_2,\mathcal{H}_1) : \exists c\geq0\ such\ that\ \forall x \in
 \mathcal{H}_2,\\ 
 &\forall 
 n \in \mathbb{N}, ||r^{-n}A^nLx||\leq c||B^nx||\}.
\end{align*}
When $\mathcal{H}_1=\mathcal{H}_2:=\mathcal{H}$ and $A=B$, the set $\mathcal{A}_r(A,A)$ is denoted as $\mathcal{A}_r(A)$ 
or if no confusion is possible, we write simply $\mathcal{A}_r$.
Moreover, in the case of the positive operator $A$ is invertible, the set $\mathcal{A}_{|\lambda|}$ is defined by
\[
 \mathcal{A}_{|\lambda|}=\{L\in\mathcal{B}(\mathcal{H}) : \sup_{n\in\mathbb{N}}||\lambda^{-n}A^nLA^{-n}||<+\infty\}.
\]
In addition, for $|\lambda|=1$, we get the Deddens algebra given in \cite{dedalg}.
We also define the intertwining values associated with the couple of operators $(A,B)$ by setting
$\Lambda_{int}(A,B)=\{ \lambda \in \mathbb{C}: \exists X \in \mathcal{B}(\mathcal{H}_2, \mathcal{H}_1)
\ such\ that\ AX=\lambda XB\}$. If $\lambda \in \Lambda_{int}(A,B)$, we denote by $E_{int}(A,B,\lambda)$ the space of
$\lambda$-intertwining operators $X \in \mathcal{B}(\mathcal{H}_2,\mathcal{H}_1)$, that is 
operators such that
$AX=\lambda XB$. When $A=B$, $\Lambda_{int}(A,A)$ is exactly the extended spectrum 
of the operator $A$, and if $\lambda \in \s{A}$, then the space of $\lambda$-intertwining operators 
is exactly $E_{ext}(T,\lambda)$.
The next result will be used several times in the sequel.
\begin{prop}\label{criterion}
Let $R$ be an injective quasinormal operator acting on a Hilbert space $\mathcal{H}$ and let 
$A$ be an injective positive operator on a Hilbert space $\mathfrak{L}$. 
%and $T=A\otimes S$. 
%%If we denote $m_A=\inf\{<Ax,x> : ||x||=1\}$,
Then, $\lambda \in \Lambda_{int}(R,A\otimes S)$ if and only if $\mathcal{A}_{|\lambda|}(R,A) \neq \{0\} $. 
%\[
%\s{T}\subset\mathbb{D}(0,\frac{m_A}{||A||}[^c. 
%\]
\end{prop}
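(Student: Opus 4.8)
The plan is to pass to the coordinate picture $\mathfrak{L}\otimes H^2\cong\bigoplus_{k\ge 0}\mathfrak{L}$ of Remark \ref{smbl} and to read the equation $RX=\lambda X(A\otimes S)$ column by column. For $X\in\mathcal{B}(\mathfrak{L}\otimes H^2,\mathcal{H})$ set $L_k\xi:=X(\xi\otimes z^k)$, so that $L_k\in\mathcal{B}(\mathfrak{L},\mathcal{H})$ with $\|L_k\|\le\|X\|$ (since $\|z^k\|_{H^2}=1$). Evaluating the intertwining relation on $\xi\otimes z^k$ and using $(A\otimes S)(\xi\otimes z^k)=A\xi\otimes z^{k+1}$ turns it into the recursion $RL_k=\lambda L_{k+1}A$ for every $k\ge 0$, and a straightforward induction yields the closed form
\[
L_nA^n=\lambda^{-n}R^nL_0\qquad(n\ge 0),
\]
which is the bridge between the two conditions. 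Throughout I will use that an injective positive operator has dense range, so each $A^n$ is injective with dense range.

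For the direct implication, start from a nonzero $X$ with $RX=\lambda X(A\otimes S)$ and put $L:=L_0$. If $L_0=0$, the closed form forces $L_nA^n=0$ for all $n$, and density of the range of $A^n$ together with boundedness of $L_n$ gives $L_n=0$, i.e. $X=0$; hence $L\ne 0$. Moreover the closed form gives $\||\lambda|^{-n}R^nLx\|=\|L_nA^nx\|\le\|X\|\,\|A^nx\|$ for all $x$ and $n$, so $L\in\mathcal{A}_{|\lambda|}(R,A)$ with constant $c=\|X\|$, proving $\mathcal{A}_{|\lambda|}(R,A)\ne\{0\}$.

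For the converse, take a nonzero $L\in\mathcal{A}_{|\lambda|}(R,A)$ with constant $c$. Injectivity of $A^n$ makes the assignment $A^nx\mapsto\lambda^{-n}R^nLx$ well defined on the range of $A^n$, the defining inequality of $\mathcal{A}_{|\lambda|}(R,A)$ bounds it by $c$, and density of that range extends it to an operator $L_n\in\mathcal{B}(\mathfrak{L},\mathcal{H})$ with $\|L_n\|\le c$ and $L_nA^n=\lambda^{-n}R^nL$. These $L_n$ satisfy $RL_n=\lambda L_{n+1}A$ (check it on the dense range of $A^n$, where both sides equal $\lambda^{-n}R^{n+1}L$), so the operator determined by $X(\xi\otimes z^n):=L_n\xi$ formally intertwines $R$ and $\lambda(A\otimes S)$ and has $L_0=L\ne 0$.

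The crux, and the step I expect to be the main obstacle, is to show that this $X$ is bounded: the uniform bound $\|L_n\|\le c$ controls each column but does not by itself bound the row operator $[L_0,L_1,\dots]$, as the scalar case already shows. This is exactly where quasinormality of $R$ must enter. Writing the polar decomposition $R=V|R|$, injectivity makes $V$ an isometry and quasinormality makes $V$ commute with $|R|$, so $R^n=V^n|R|^n$ and, for $n\ge m$, $\langle V^na,V^mb\rangle=\langle V^{n-m}a,b\rangle$. Substituting the closed form into $\|\sum_{n\le N}L_n\xi_n\|^2$ and expanding the inner products, the isometric factors $V^n$ should supply precisely the cancellation between distinct shift levels that is absent in the purely normal situation (compare the boundedness of $\operatorname{diag}(\lambda^{-n})$ for $|\lambda|\ge 1$); combined with the inequality defining $\mathcal{A}_{|\lambda|}(R,A)$ this should give a bound $\|Xv\|\le C\|v\|$ uniform in $N$. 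Once $X$ is bounded the intertwining relation holds on the dense span and extends by continuity, and since $X\ne 0$ we conclude $\lambda\in\Lambda_{int}(R,A\otimes S)$.
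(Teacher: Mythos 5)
Your forward implication is exactly the paper's argument (columns $L_k\xi=X(\xi\otimes z^k)$, the recursion $RL_k=\lambda L_{k+1}A$, the closed form $L_nA^n=\lambda^{-n}R^nL_0$, density of $\operatorname{Im}A^n$), and your converse construction of the columns $L_n$ is also verbatim the paper's. The gap is the step you flagged yourself: boundedness of the row operator $X=[L_0,L_1,\dots]$. You are right that $\sup_n\|L_n\|\le c$ does not bound the row, but the cancellation you hope to extract from quasinormality does not exist, and no argument can close this step in the stated generality. Two examples. First, take $R=S$ on $H^2$ (an injective quasinormal operator), $\mathfrak{L}=\mathbb{C}$, $A=1$, $\lambda=1$, and $L=f\otimes 1$ with $f\in H^2\setminus H^\infty$: then $L\in\mathcal{A}_{1}(S,1)\setminus\{0\}$, but $L_n$ maps $c\mapsto c\,z^nf$, so under the identification $\oplus_{k\ge0}\mathbb{C}\cong H^2$ your $X$ is the multiplication operator $h\mapsto fh$, which is unbounded precisely because $f\notin H^\infty$ --- even though here $V=S$ is an isometry and the statement itself is true (witnessed by a different operator, e.g.\ the identity). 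Second, and worse, on the critical circle the ``if'' direction of the statement itself can fail: take $R=mI$ on $\mathbb{C}$ (injective, quasinormal) and $A=1$. Every $L$ lies in $\mathcal{A}_{m}(R,A)$, yet $RX=\lambda X(1\otimes S)$ with $|\lambda|=m$ gives $S^{\ast}X^{\ast}w=\overline{(m/\lambda)}\,X^{\ast}w$ for all $w$, and since $S^{\ast}$ has no unimodular eigenvalues this forces $X=0$; so $\mathcal{A}_{|\lambda|}(R,A)\neq\{0\}$ while $\lambda\notin\Lambda_{int}(R,A\otimes S)$. Hence the cancellation heuristic is not merely unproved --- it is unprovable at the radius $|\lambda|=m_{|R|}/\|A\|$ without extra hypotheses.

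What does survive, and what is actually used downstream, is the case with geometric slack: if $L\in\mathcal{A}_r(R,A)$ for some $r<|\lambda|$, then $\|L_n\|\le c\,(r/|\lambda|)^n$, the row is bounded by the $\ell^2$-sum of the column norms, and your argument closes; this covers the first step of the proof of Theorem \ref{intertwining values}, where $L\in\mathcal{A}_{(m_{|R|}+\varepsilon)/(\|A\|-\varepsilon)}$ is applied to all $|\lambda|$ strictly larger. On the circle itself, boundedness has to come from a special choice of witness rather than from an arbitrary $L$ (e.g.\ in the extended-eigenvector setting of Theorem \ref{eigvect_pure_qn} the constructed operator is block \emph{diagonal} with uniformly bounded blocks, hence automatically bounded; for pure $R$ one can take $u=u_0\otimes 1$ so that the vectors $V^nu$ are orthonormal). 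For fairness: the paper's own proof of this proposition is silent on exactly this point --- it writes down the same columns $X_n$ and asserts $X\in E_{int}(R,A\otimes S,\lambda)$ without checking that the row operator is bounded --- so in flagging boundedness as the crux you identified a genuine lacuna in the source, not a trick you failed to find; but your sketch of how quasinormality would repair it cannot be turned into a proof.
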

\begin{proof}
Assume that $\lambda \in \Lambda_{int}(R,A\otimes S)$ and let 
$X=[X_0,\ldots,X_n,\ldots]\in \mathcal{B}(\oplus_{k=0}^{+\infty}\mathfrak{L},\mathcal{H})$ be a nonzero operator
satisfying $RX=\lambda X(A\otimes S)$. Since $R$ is injective, we have $\lambda \neq0$.
An easy calculation shows that $RX_k=\lambda X_{k+1}A$,
and hence $R^kX_0=\lambda^k X_kA^k$ for any $k \in \mathbb{N}$. On the one hand, since the range of $A^k$ is dense, 
it implies
that $X_0$ is necessarily non-null. On the other hand, we see that
\[
 |\lambda|^{-n}||R^nX_0x||\leq  |X||||A^nx||,\   \ \forall n\in\mathbb{N},\ \ \forall x\in\mathfrak{L}.
\]
Consequently, $X_0$ is a non-zero element of $\mathcal{A}_{|\lambda|}(R,A)$.

Reciprocally, if $L\in\mathcal{A}_{|\lambda|}(R,A) \setminus \{0\}$, we define for all $n\in\mathbb{N}$, the operator  
\[
\begin{array}{ccccc}
\breve{X}_n & : & ImA^n & \to & \mathcal{H} \\
 & & A^nx & \mapsto & \lambda^{-n}R^nLx. \\
\end{array} 
\]
Since $L\in\mathcal{A}_{|\lambda|}(R,A)$, there is $c\geq0$ such that for all $y\in ImA^n$, 
$||\breve{X}_ny||\leq c||y||$. Also, $ImA^n$ is dense in $\mathfrak{L}$, thus $\breve{X}_n$
has a (unique bounded) extension on $\mathfrak{L}$, which will be denoted by $X_n$.
It remains to verify that $RX_n=\lambda X_{n+1}A$, for all $n\in\mathbb{N}$. So, let $x\in\mathfrak{L}$,
and $y=A^nx$, then
\begin{align*}
 RX_ny&=RX_nA^nx=R\breve{X}_nA^nx=\lambda^{-n}R^{n+1}Lx\\
 &=\lambda\lambda^{-(n+1)}R^{n+1}Lx=\lambda\breve{X}_{n+1}A^{n+1}x=\lambda X_{n+1}Ay.
\end{align*}
By density, we get $RX_n=\lambda X_{n+1}A$, and hence $X\in E_{int}(R,A\otimes S,\lambda) \setminus \{0\}$, as we wanted.
\end{proof}
Let $T$ be a self-adjoint operator acting on a Hilbert space $H$, we denote by $m_T=\inf\{<Tx,x> : ||x||=1\}$. 
We observe that $m_T=1/||T^{-1}||$ when $T$ is an invertible positive operator. 
Also denote, as usual, by $\sigma(T)$ and $\sigma_p(T)$ the spectrum and the point spectrum of $T$ respectively.
The following Theorem is the main result of the paper and will be used several times in the sequel.

\begin{thm}\label{intertwining values}
Let $R \in \mathcal{B}(\mathcal{H})$ be an injective quasinormal operator and let $A \in \mathcal{B}(\mathfrak{L})$ 
be an injective positive operator, then we have:
\begin{enumerate}
 \item if $(m_{|R|}, ||A||) \in \sigma_p(|R|)\times \sigma_p(A)$, then 
\[
\Lambda_{int}(R,A\otimes S)=\mathbb{D}(0,\frac{m_{|R|}}{||A||}[^c;
\]
 \item if $ (m_{|R|},||A||) \notin \sigma_p(|R|)\times \sigma_p(A)$, then 
\[
 \Lambda_{int}(R,A\otimes S)=\mathbb{D}(0,\frac{m_{|R|}}{||A||}]^c.
\]
\end{enumerate}
\end{thm}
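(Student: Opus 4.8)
The plan is to reduce everything, via Proposition~\ref{criterion}, to the question of when the set $\mathcal{A}_{|\lambda|}(R,A)$ is non-trivial, and then to exploit the quasinormality of $R$ to replace $R$ by the positive operator $|R|$. The key algebraic fact I would establish first is that, since $R$ commutes with $R^{*}R$, one has $(R^{*})^{n}R^{n}=(R^{*}R)^{n}=|R|^{2n}$ by an immediate induction, whence $\|R^{n}y\|=\||R|^{n}y\|$ for every $y\in\mathcal{H}$ and every $n$. Thus $L\in\mathcal{A}_{|\lambda|}(R,A)$ if and only if there is $c\ge 0$ with $|\lambda|^{-n}\,\||R|^{n}Lx\|\le c\,\|A^{n}x\|$ for all $x$ and $n$. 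Writing $a=m_{|R|}=\inf\sigma(|R|)$, $b=\|A\|=\sup\sigma(A)$ and $\rho_0=a/b$, the whole statement becomes the claim that $\mathcal{A}_{r}(R,A)\neq\{0\}$ exactly when $r>\rho_0$, together with the boundary case $r=\rho_0$ being non-trivial if and only if $a\in\sigma_p(|R|)$ and $b\in\sigma_p(A)$.

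For the elementary exclusion I would argue that if $r<\rho_0$ then $\mathcal{A}_{r}(R,A)=\{0\}$: using $|R|\ge aI$ we get $\||R|^{n}Lx\|\ge a^{n}\|Lx\|$, while $\|A^{n}x\|\le b^{n}\|x\|$, so the defining inequality forces $(a/(rb))^{n}\|Lx\|\le c\|x\|$ for all $n$; since $a/(rb)>1$ this is impossible unless $Lx=0$ for every $x$.

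For the existence half when $r>\rho_0$ I would produce a rank-one $L$ by spectral calculus. Let $E$ and $F$ denote the spectral measures of $A$ and of $|R|$. Since $b\in\sigma(A)$ and $a\in\sigma(|R|)$, the projections $E([b-\varepsilon,b])$ and $F([a,a+\delta])$ are non-zero; choose a unit vector $u\in\mathrm{Im}\,E([b-\varepsilon,b])\subseteq\mathfrak{L}$ and a unit vector $v\in\mathrm{Im}\,F([a,a+\delta])\subseteq\mathcal{H}$, and set $Lx=\langle x,u\rangle v$. Then $\||R|^{n}Lx\|=|\langle x,u\rangle|\,\||R|^{n}v\|\le(a+\delta)^{n}|\langle x,u\rangle|$, while $\|A^{n}x\|\ge\|A^{n}E([b-\varepsilon,b])x\|\ge(b-\varepsilon)^{n}|\langle x,u\rangle|$. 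Choosing $\varepsilon,\delta>0$ small enough that $(a+\delta)/(b-\varepsilon)\le r$ (possible since $(a+\delta)/(b-\varepsilon)\to\rho_0<r$) yields $L\in\mathcal{A}_{r}(R,A)\setminus\{0\}$. The same construction with genuine eigenvectors $Au=bu$ and $|R|v=av$ handles the sufficiency part of the boundary case $r=\rho_0$, where one checks directly that $r^{-n}\||R|^{n}Lx\|=b^{n}|\langle x,u\rangle|\le\|A^{n}x\|$.

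The delicate point, and the step I expect to be the main obstacle, is the necessity in the boundary case $r=\rho_0$: extracting genuine eigenvalues from a mere growth estimate. Given a non-zero $L\in\mathcal{A}_{\rho_0}(R,A)$, fix $x$ with $Lx\neq0$. Combining the defining inequality with $\|A^{n}x\|\le b^{n}\|x\|$ gives $\||R|^{n}Lx\|\le c\|x\|\,a^{n}$. Writing this spectrally as $\int s^{2n}\,d\|F(s)Lx\|^{2}\le c^{2}\|x\|^{2}a^{2n}$ and dividing by $a^{2n}$, the integrand $(s/a)^{2n}$ would blow up on any part of the support of the measure lying in $(a,\|\,|R|\,\|]$; boundedness in $n$ therefore forces this measure to be concentrated at $\{a\}$, i.e.\ $Lx$ is an eigenvector of $|R|$ for the eigenvalue $a$, so $a\in\sigma_p(|R|)$ and $\||R|^{n}Lx\|=a^{n}\|Lx\|$. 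Feeding this back, the inequality becomes $b^{n}\|Lx\|\le c\|A^{n}x\|$, that is $\int (t/b)^{2n}\,d\|E(t)x\|^{2}\ge\|Lx\|^{2}/c^{2}>0$ for all $n$; letting $n\to\infty$ and using dominated convergence, the left side tends to $\|E(\{b\})x\|^{2}$, so $E(\{b\})\neq0$ and $b\in\sigma_p(A)$. This closes the argument, and I would finally reassemble the three regimes into the stated dichotomy between the closed and the open complementary discs.
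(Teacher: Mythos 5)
Your proof is correct, and away from the boundary circle it is essentially the paper's own argument: both reduce the theorem through Proposition \ref{criterion} to deciding when $\mathcal{A}_{|\lambda|}(R,A)\neq\{0\}$, both use quasinormality only through $|R^n|=|R|^n$ (your identity $\|R^n y\|=\||R|^n y\|$), both build rank-one elements of $\mathcal{A}_r(R,A)$ from the spectral projections $E^{|R|}([m_{|R|},m_{|R|}+\delta])$ and $E^A([\|A\|-\varepsilon,\|A\|])$ when $r>m_{|R|}/\|A\|$ (your direct lower bound $\|A^nx\|\geq (\|A\|-\varepsilon)^n|\langle x,u\rangle|$ replaces, a little more cleanly, the paper's factorization $b=A^nb_n$), and both use genuine eigenvectors for the circle in case (1). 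The genuine divergence is the non-existence on the circle in case (2). The paper works with the first column $X_0$ of the intertwiner, invokes the polar decomposition $R=V|R|$ with $V$ an isometry commuting with $|R|$, uses the density of $\mathrm{Ran}(R^{\ast n})$, and kills $\langle X_0x,y\rangle$ via the two spectral limits $\|m_{|R|}^n|R|^{-n}y\|^2\to E^{|R|}_{y,y}(\{m_{|R|}\})$ and $\|(A/\|A\|)^nx\|^2\to E^A_{x,x}(\{\|A\|\})$, so that $X_0=0$ once one of the two projections vanishes. You instead prove the contrapositive as a rigidity statement about a nonzero $L\in\mathcal{A}_{m_{|R|}/\|A\|}(R,A)$: concentration of the measure $\|F(\cdot)Lx\|^2$ at $\{m_{|R|}\}$ forces every nonzero $Lx$ into $\ker(|R|-m_{|R|})$, and feeding $\||R|^nLx\|=m_{|R|}^n\|Lx\|$ back gives $E^A(\{\|A\|\})\neq 0$; this is slightly stronger than what the paper extracts (it locates the range of $L$ inside the eigenspace) and dispenses with the polar decomposition entirely. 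One loose end you share with the paper rather than repair: on the circle you divide by $m_{|R|}^{2n}$, which implicitly assumes $m_{|R|}>0$; this is harmless because a boundary $\lambda$ with $m_{|R|}=0$ would be $\lambda=0$, excluded by the injectivity of $R$, but it merits a sentence, as does the observation that $\mathcal{A}_r$ increases with $r$ when you reassemble the three regimes.
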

\begin{proof}
The first step consists in proving the inclusion 
\[
\mathbb{D}(0,\frac{m_{|R|}}{||A||}]^c \subseteq \Lambda_{int}(R,A\otimes S).
\]
Let $0<\varepsilon < ||A||$,
if we denote by $E^A$ (resp. $E^{|R|}$) the spectral measure of $A$ (resp. $|R|$), then 
we can choose a nonzero vector $a$ (resp. a nonzero vector $b$) in $E^{|R|}([ m_{|R|} , m_{|R|} +\varepsilon \})(\mathcal{H})$ 
(resp. in $E^A(\left[ ||A||-\varepsilon, ||A|| \right] )(\mathfrak{L})$) because 
$m_{|R|}=\inf\{\lambda: \lambda\in \sigma(|R|)\}$ 
(resp. $||A||=\sup\{\lambda : \lambda \in \sigma(A)\}$). Observe that $b$ can be written under the form $b=A^nb_n$ where
\[
 b_n=\left( \int_{||A||-\varepsilon}^{||A||}t^{-n}dE^{A}(t) \right)b.
\]
Set $L=a\otimes b$, since $R$ is quasinormal we have
\begin{align*}
 ||R^nLx||&=|||R|^na|| | <x,b> |=|||R|^na||  ||b_n|| | <A^nx,\frac{b_n}{||b_n||} > | \\
 &\leq |||R|^na||  ||b_n||||A^nx|| \leq ||a|| ||b|| \left( \frac{m_{|R|}+\varepsilon}{||A||-\varepsilon}\right)^n||A^nx||.
\end{align*}
Hence, the non-null operator $L$ belongs to 
$\in\mathcal{A}_{\frac{m_{|R|}+\varepsilon}{||A||-\varepsilon}}(R,A)$. Applying Proposition \ref{criterion}, we see that 
$\mathbb{D}(0,\frac{m_{|R|}+\varepsilon}{||A||-\varepsilon}[^c \subseteq \Lambda_{int}(R,A\otimes S)$. Since
$\varepsilon$ could be arbitrarily chosen in $]0,||A||[$, we obtained the desired inclusion. 

The second step is to prove that 
$\Lambda_{int}(R,A\otimes S) \subseteq \mathbb{D}(0,\frac{m_{|R|}}{||A||}[^c$. 
%First, observe that we can assume 
%that $m_{|R|} > 0$, otherwise the inclusion is obvious. 
Let $\lambda \in \Lambda_{int}(R,A\otimes S)$, we know from 
Proposition \ref{criterion} that there exists a non-null operator $L \in \mathcal{A}_{| \lambda |}(R,A)$.
Recall that since the operator $R$ is quasinormal, we have $|R^n|=|R|^n$.
Therefore, there exists an absolute positive constant $C$ such that
$
(m_{|R|})^{2n}L^{\ast}L \leq L^{\ast} |R|^{2n}L=L^{\ast} R^{\ast n}R^nL
\leq C^2 |\lambda|^{2n}A^{2n}.
$
Then, we necessarily have 
\[ 
\frac{m_{|R|}}{||A||} \left(\frac{||L||}{C}\right)^{\frac{1}{n}} \leq |\lambda|, 
\] 
and letting $n \rightarrow \infty$ we obtained the desired conclusion. 
We are now in position to prove the announced assertions.

(1) By hypothesis, there exists a couple of unit 
eigenvectors $(u,v) \in \mathcal{H}\times \mathfrak{L}$ such that $|R|u=m_{|R|}u$ and $Av=||A||v$. 
We can see that the 
operator $L=u\otimes v$ is in $\mathcal{A}_{m_{|R|}/||A||}(R,A)$. From 
Proposition \ref{criterion}, we deduced that the circle $\mathcal{C}(0,m_{|R|}/||A||)$ centered in $0$ and 
of radius $m_{|R|}/||A||$ is contained in $\Lambda_{int}(R,A\otimes S)$.
Using the two firsts steps of the proof, we can conclude.

(2) Suppose that $\lambda \in \Lambda_{int}(R,A\otimes S)$ with $|\lambda|=\frac{m_{|R|}}{||A||}$, 
then there exists $X\neq0$ such that $RX=\lambda X(A\otimes S)$. 
Since $R$ is injective, then $\lambda\neq 0$ and hence $|R|$ is invertible ($m_{|R|}>0$).
As in the proof of the last proposition, we write $X=[X_0,\ldots,X_n,\ldots]$, and we get
$R^nX_0=\lambda^n X_nA^n$. Let $R=V|R|$ be the polar decomposition of the operator $R$, 
since $R$ is injective we see that $V$ is an isometry. 
Choose $x\in\mathfrak{L}$ and $y=R^{\ast n}b \in Im(R^{\ast n})$, we derive that
\[
 |<X_0x,y>|=\left(\frac{m_{|R|}}{||A||}\right)^n|<X_nA^nx,b>|
  \leq ||X||||\left(\frac{A}{||A||}\right)^nx||m_{|R|}^n||b||.
\]
Since $R$ is quasinormal, the isometry $V$ commute with $|R|$.
Then, observe that we can choose $b$ in the closure of the range of $R^n$ which is contained in the range of $V^n$, hence 
we can write $b=V^nc$.
Therefore, we get $ |||R|^{-n}y||= ||V^{\ast n}b||=||V^{\ast n}V^nc||=||c||=||b||$.
Then, using the density of the range of $R^{\ast n}$, for all $(x,y)\in \mathfrak{L}\times \mathcal{H} $  we obtain
\[
 |<X_0x,y>| \leq ||X||||\left(\frac{A}{||A||}\right)^nx||||m_{|R|}^n|R|^{-n}y||.
\]
But
\[
 ||m_{|R|}^n|R|^{-n}y||^2=\int_{m_{|R|}}^{||R||}m_{|R|}^{2n}\frac{1}{t^{2n}}
 dE^{|R|}_{y,y}(t)
 \underset{n\rightarrow +\infty}{\longrightarrow}E^{|R|}_{y,y}(\{m_{|R|}\}).
\]
Similarly,  we see
\[
 ||\left(\frac{A}{||A||}\right)^nx||^2 \underset{n\rightarrow +\infty}{\longrightarrow}E^A_{x,x}(\{||A||\}).
\]
%Finally, since $A$ is injective we observe that
%\[
 %||A^n(\varepsilon I + A)^{-n}y-y||^2=int_{0}^{||A||}|\frac{t^n}{(t+\varepsilon)^n}-1 |dE^A_{y,y}(t)
%\underset{n\rightarrow +\infty}{\longrightarrow}E^A_{y,y}(\{0\})=0.
%\]

According to the assumptions of (2), we must have at least one of the two spectral projections 
$E^{|R|}(\{m_{|R|}\}$ or $E^A(\{||A||\}$ null. Thus,
using the three previous facts, we see that $X_0=0$ which implies $X=0$. So, we get a contradiction. 
Consequently, using Proposition \ref{criterion}, it follows that the circle $\mathcal{C}(0,m_{|R|}/||A||)$ does 
not intersect $\Lambda_{int}(R,A\otimes S)$. From the two firsts steps of the proof, we derive that 
$\Lambda_{int}(R,A\otimes S)=\mathbb{D}(0,\frac{m_{|T|}}{||T||}]^c$.
%%Consequently, by using the Lemma \ref{sm}, if $\lambda\in\s{T}$, then $|\lambda|>\frac{m_A}{||A||}$.
This finishes the proof of Theorem \ref{intertwining values}.

\end{proof}

%Let $A$ be an injective positive operator acting on $\mathcal{B}(\mathfrak{L})$, and $r\in\mathbb{R}_+^*$. Consider
%\[
%\mathcal{A}_r=\{L\in\mathcal{B}(\mathfrak{L}) : \exists c\geq0\ such\ that\ \forall x\in\mathfrak{L}, \forall 
%n \in \mathbb{N}, ||r^{-n}A^nLx||\leq c||A^nx||\}.
%\]

\begin{coro}\label{extended spectrum}
Let $T$ be a pure quasinormal operator acting on a Hilbert space $\mathcal{H}$. We have
$\s{T}=\mathbb{D}(0,\frac{m_{|T|}}{||T||}[^c$ when $ m_{|T|}$ and $ ||T||$ are in
$\sigma_p(|T|)$,
and we have $ \s{T}=\mathbb{D}(0,\frac{m_{|T|}}{||T||}]^c $ when $ m_{|T|}$ and $||T||$ are not both in $\sigma_p(|T|)$.
\end{coro}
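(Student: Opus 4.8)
The plan is to deduce the corollary directly from Theorem \ref{intertwining values} together with Brown's structure theorem. Since $T$ is a \emph{pure} quasinormal operator, Theorem \ref{brown} (in the refined form of Remark \ref{smbl}(1)) provides a unitary $U_T$ with $U_T T U_T^{\ast}=A_T\otimes S$, where $A_T=|T|_{\rvert}\mathfrak{L}_T$ is an injective positive operator. Because the extended spectrum is invariant under unitary equivalence --- the same one-line argument as in Remark \ref{smbl}(2), conjugating the relation $TX=\lambda XT$ by $U_T$ --- we obtain
\[
\s{T}=\s{A_T\otimes S}=\Lambda_{int}(A_T\otimes S,A_T\otimes S).
\]
Thus the whole problem is transferred to computing an intertwining-value set of the kind handled by the main theorem.

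Next I would apply Theorem \ref{intertwining values} with the choice $R:=A_T\otimes S$ and $A:=A_T$, noting the crucial coincidence $A\otimes S=A_T\otimes S$, so that $\Lambda_{int}(R,A\otimes S)=\Lambda_{int}(A_T\otimes S,A_T\otimes S)=\s{T}$. To invoke the theorem I must check its hypotheses for this choice: the operator $R=A_T\otimes S$ is pure quasinormal (hence quasinormal) and is injective because both $A_T$ and the shift $S$ are injective, while $A=A_T$ is injective and positive by construction. The theorem then delivers the two-case dichotomy with threshold radius $m_{|R|}/\|A\|$, and it remains only to recognize this radius and the accompanying point-spectrum conditions in terms of $T$.

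The substantive bookkeeping is the identification of the spectral data of $(R,A)$ with those of $T$. Using $S^{\ast}S=I$ and the positivity of $A_T$, one computes $|R|=|A_T\otimes S|=\bigl((A_T^2\otimes I)\bigr)^{1/2}=A_T\otimes I$, whence $m_{|R|}=\inf\sigma(A_T\otimes I)=m_{A_T}$ and $\sigma_p(|R|)=\sigma_p(A_T\otimes I)=\sigma_p(A_T)$. On the other hand $|T|=U_T^{\ast}(A_T\otimes I)U_T$, so $m_{|T|}=m_{A_T}$, $\|T\|=\|A_T\|=\|A\|$, and $\sigma_p(|T|)=\sigma_p(A_T)=\sigma_p(A)$. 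Consequently $m_{|R|}/\|A\|=m_{|T|}/\|T\|$, and the condition $(m_{|R|},\|A\|)\in\sigma_p(|R|)\times\sigma_p(A)$ is equivalent to requiring $m_{|T|}\in\sigma_p(|T|)$ and $\|T\|\in\sigma_p(|T|)$ simultaneously. Feeding these equivalences into cases (1) and (2) of Theorem \ref{intertwining values} produces exactly the two asserted descriptions of $\s{T}$.

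The main (and essentially the only) obstacle is this last transfer of data: one must carefully justify the tensor identities $|A_T\otimes S|=A_T\otimes I$ and $\sigma_p(A_T\otimes I)=\sigma_p(A_T)$, since it is precisely these that guarantee the point-spectrum dichotomy --- and hence the open-versus-closed distinction between $\mathbb{D}(0,m_{|T|}/\|T\|[^c$ and $\mathbb{D}(0,m_{|T|}/\|T\|]^c$ --- is reproduced faithfully when passing from the $(R,A)$ formulation back to the original operator $T$. Everything else is a routine substitution.
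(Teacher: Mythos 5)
Your proof is correct and follows essentially the same route as the paper's own: reduce $T$ to $A_T\otimes S$ via Theorem \ref{brown} and Remark \ref{smbl}, identify the spectral data of $T$ with those of $A_T$, and then apply Theorem \ref{intertwining values} with $R=A_T\otimes S$ and $A=A_T$. The only difference is that you explicitly verify the identifications ($|A_T\otimes S|=A_T\otimes I$, $\sigma_p(A_T\otimes I)=\sigma_p(A_T)$, $m_{|R|}=m_{|T|}$, $\|A_T\|=\|T\|$) that the paper dispatches with ``we clearly have'', which is a welcome tightening but not a different argument.
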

\begin{proof}
Applying Theorem \ref{brown} and taking into account Remark \ref{smbl} we see that $T$ is unitarily equivalent 
to the operator $A_T\otimes S$ acting on the Hilbert space 
$\mathfrak{L}_T\otimes H^2$, where $\mathfrak{L}_T=\mathcal{H}\ominus V_T\mathcal{H}$. We set for simplicity $A:=A_T$. 
We clearly have $m_{|T|}=m_A$, $||T||=||A||$ and $\sigma_p(|T|)=\sigma_p(A)$. 
Therefore, from now on, we may assume that $T$ is under the form $A\otimes S$ and $\mathcal{H}=\mathfrak{L}_T\otimes H^2$. 
Then, it suffices to apply Theorem \ref{intertwining values} with $R=A \otimes S$.
\end{proof}
%\begin{rmq}\in\mathcal{A}_{|\lambda|}
 %In virtue of the last corollary and its proof, we get easily the following equality
%\[
% \s{T}=\{\lambda\in\mathbb{C} : E^A(\sigma(T)\cap\sigma(\lambda T))\neq0\}.
%\]
%Such an equality which stays valid in the case of $A$ is non invertible.
%\end{rmq}

\end{section}

\begin{section}{Case of subnormal operators}
 It is known that an operator $T\in\lh$ is subnormal if there is a Hilbert space $\mathcal{K}$ containing $\mathcal{H}$
and a normal operator $N\in\mathcal{B}(\mathcal{K})$ such that $S=N|_\mathcal{H}$. This extension is minimal (m.n.e.) if
$\mathcal{K}$ has no proper subspace that reduces $N$ and contains $\mathcal{H}$. In addition, 
we know that every quasinormal operator
is subnormal. So, we show the following theorem in the more general case, that is the subnormal one. 
In particular, it is true for 
quasinormal operators.

\begin{thm}\label{subnl}
 Let $N\in\mathcal{B}(\mathcal{E})$ and $T\in\mathcal{B}(\mathcal{F})$ be normal 
 and pure subnormal operators respectively, 
such that the operator $R=N\oplus T\in\mathcal{B}(\mathcal{E}\oplus\mathcal{F})$ is injective.
Let $\lambda\in\s{Z}$, and let 
\[X=\begin{bmatrix}
X_1 & X_2 \\
X_3 & X_4 \\
\end{bmatrix}\in E_{ext}(Z,\lambda).
\]
Then $X_3=0$, $X_1\in E_{ext}(N,\lambda)$, $X_4\in E_{ext}(T,\lambda)$
and $X_2 \in E_{int}(N,T,\lambda)$.
\end{thm}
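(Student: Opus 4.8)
The plan is to expand the single operator equation $ZX=\lambda XZ$, where $Z=N\oplus T$, into its four block components and then to observe that three of them are exactly the desired conclusions, leaving only one genuine assertion to prove. Writing $Z=\begin{bmatrix} N & 0\\ 0 & T\end{bmatrix}$ with respect to $\mathcal{E}\oplus\mathcal{F}$ and carrying out the block multiplication, the relation $ZX=\lambda XZ$ is equivalent to the system $NX_1=\lambda X_1N$, $NX_2=\lambda X_2T$, $TX_3=\lambda X_3N$ and $TX_4=\lambda X_4T$. The first, second and fourth identities say precisely that $X_1\in E_{ext}(N,\lambda)$, $X_2\in E_{int}(N,T,\lambda)$ and $X_4\in E_{ext}(T,\lambda)$, so for these there is nothing further to do. I also record that, since $Z$ is injective while $X\neq 0$, we necessarily have $\lambda\neq 0$.

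The whole content of the theorem therefore lies in deducing $X_3=0$ from $TX_3=\lambda X_3N$, with $X_3\in\mathcal{B}(\mathcal{E},\mathcal{F})$, $T$ pure subnormal and $N$ normal. The hard part will be to exploit the purity of $T$, and the device I would use is to pass to the minimal normal extension. Let $M\in\mathcal{B}(\mathcal{K})$ be the minimal normal extension of $T$, so that $\mathcal{F}\subseteq\mathcal{K}$ is $M$-invariant and $M|_{\mathcal{F}}=T$. Regarding $X_3$ as an operator $\widetilde{X}_3\in\mathcal{B}(\mathcal{E},\mathcal{K})$ with range inside $\mathcal{F}$, the invariance of $\mathcal{F}$ and the identity $M|_{\mathcal{F}}=T$ immediately give $M\widetilde{X}_3=\lambda\widetilde{X}_3N$. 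Since $\lambda N$ is normal and $M$ is normal, the Fuglede--Putnam theorem applies and furnishes the companion relation $M^*\widetilde{X}_3=\bar{\lambda}\,\widetilde{X}_3N^*$.

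With both relations in hand the conclusion follows by a reducing-subspace argument, which I expect to be the second delicate point. Put $\mathcal{R}=\overline{\operatorname{range}\widetilde{X}_3}\subseteq\mathcal{F}$. From $M\widetilde{X}_3x=\widetilde{X}_3(\lambda Nx)$ and $M^*\widetilde{X}_3x=\widetilde{X}_3(\bar{\lambda}N^*x)$ one sees that $M$ and $M^*$ both map $\operatorname{range}\widetilde{X}_3$ into itself, so $\mathcal{R}$ reduces $M$. Because $\mathcal{R}\subseteq\mathcal{F}$ and $M|_{\mathcal{F}}=T$, we get $T\mathcal{R}=M\mathcal{R}\subseteq\mathcal{R}$; and since $M^*\mathcal{R}\subseteq\mathcal{R}\subseteq\mathcal{F}$, the formula $T^*=P_{\mathcal{F}}M^*|_{\mathcal{F}}$ (valid because $\mathcal{F}$ is $M$-invariant, $P_{\mathcal{F}}$ being the orthogonal projection of $\mathcal{K}$ onto $\mathcal{F}$) yields $T^*\mathcal{R}=M^*\mathcal{R}\subseteq\mathcal{R}$. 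Hence $\mathcal{R}$ reduces $T$ and $T|_{\mathcal{R}}=M|_{\mathcal{R}}$ is normal. As $T$ is pure it has no nonzero reducing subspace on which it acts normally, forcing $\mathcal{R}=\{0\}$ and therefore $X_3=0$. The two steps I regard as the crux are the lifting to $M$ (which is what makes Fuglede--Putnam applicable) and the verification that $\mathcal{R}$ reduces $T$ itself rather than only $M$.
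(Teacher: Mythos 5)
Your proof is correct, and while it follows the same overall skeleton as the paper's --- expand $ZX=\lambda XZ$ into the four block relations, of which three are exactly the stated conclusions, then reduce everything to showing $X_3=0$ by lifting $TX_3=\lambda X_3N$ to the minimal normal extension $M$ of $T$ and applying Fuglede--Putnam --- the two halves are implemented differently, and your endgame is genuinely distinct. For the Fuglede--Putnam step, the paper builds a $3\times 3$ dilation on $\mathcal{E}\oplus\mathcal{F}\oplus\mathcal{G}$: it forms normal operators $\tilde M$ (zero on $\mathcal{E}$, equal to $M$ on $\mathcal{F}\oplus\mathcal{G}$) and $\tilde N$ (equal to $N$ on $\mathcal{E}$, zero elsewhere), together with the operator $\tilde X$ carrying $X_3$, and reads off from $\tilde M^*\tilde X=\bar\lambda\tilde X\tilde N^*$ the relation $T^*X_3=\bar\lambda X_3N^*$ with the adjoint computed inside $\mathcal{F}$; this dilation is precisely the standard proof of the two-space Fuglede--Putnam theorem you invoke directly, and the companion entry $Y^*X_3=0$ that the paper leaves implicit is exactly your observation that the identity $M^*\widetilde{X}_3=\bar\lambda\widetilde{X}_3N^*$ forces $M^*$ to map $\operatorname{range}\widetilde{X}_3$ back into $\mathcal{F}$. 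The real divergence comes after: the paper iterates to $T^nX_3=\lambda^nX_3N^n$ and $T^{*m}X_3=\bar\lambda^mX_3N^{*m}$, uses normality of $N$ a second time to commute $N^n$ with $N^{*m}$, concludes $\operatorname{Im}X_3\subseteq\mathcal{M}:=\bigcap_{m,n}\ker(T^{*m}T^n-T^nT^{*m})$, and then checks by hand that $\mathcal{M}$ reduces $T$ and that $T|_{\mathcal{M}}$ is normal. You instead take $\mathcal{R}=\overline{\operatorname{range}\widetilde{X}_3}$ inside $\mathcal{K}$: the two intertwining identities show at once that $\mathcal{R}$ reduces $M$, so $M|_{\mathcal{R}}$ is normal for free, and your verification that $\mathcal{R}$ also reduces $T$ itself --- via $T^*=P_{\mathcal{F}}M^*|_{\mathcal{F}}$ combined with $M^*\mathcal{R}\subseteq\mathcal{R}\subseteq\mathcal{F}$ --- is precisely the delicate point, and you handle it correctly. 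Your route is shorter, avoids the iterated commutator computations and the extra use of normality of $N$, and produces a concrete reducing subspace; the paper's route stays entirely inside $\mathcal{F}$ after the Fuglede--Putnam step and isolates the canonical commutator kernel $\mathcal{M}$, at the cost of more verification. Both arguments close identically by purity of $T$.
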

\begin{proof}
The hypothesis imply
 \begin{equation}\label{sys}
 \left\{
  \begin{array}{l l}
    NX_1=\lambda X_1N \\
    NX_2=\lambda X_2T \\
    TX_3=\lambda X_3N \\
    TX_4=\lambda X_4T \\
  \end{array} \right.
\end{equation}
Clearly, it suffices to show that $X_3=0$.
So, let 
\[M=\begin{bmatrix}
T & Y \\
0 & T_1 \\
\end{bmatrix}\in\mathcal{B}(\mathcal{F}\oplus\mathcal{G}),
\]
be the m.n.e. of $T$, and consider the following operators defined on $\mathcal{E}\oplus\mathcal{F}\oplus\mathcal{G}$
by
\[
\tilde{M}=\begin{bmatrix}
0 & 0 & 0 \\
0 & T & Y \\
0 & 0 & T_1 \\
\end{bmatrix},\  \tilde{N}=\begin{bmatrix}
N & 0 & 0 \\
0 & 0 & 0 \\
0 & 0 & 0 \\
\end{bmatrix},\  \tilde{X}=\begin{bmatrix}
0 & 0 & 0 \\
X_3 & 0& 0 \\
0 & 0 & 0 \\
\end{bmatrix}.
\]
Then formulas \ref{sys} imply $\tilde{M}\tilde{X}=\lambda\tilde{X}\tilde{N}$.
But both $\tilde{M}$ and $\tilde{N}$ are normal operators, by using the Fuglede-Putnam theorem it follows
$\tilde{M}^*\tilde{X}=\bar{\lambda}\tilde{X}\tilde{N}^*$.
From this, we have easily $T^*X_3=\bar{\lambda}X_3N^*$. Hence, for all $m,n\in\mathbb{N}$ we get the following system
\begin{equation*}
 \left\{
  \begin{array}{l l}
    T^nX_3=\lambda^nX_3N^n \\
    T^{*m}X_3=\bar{\lambda}^mX_3N^{*m}, \\
  \end{array} \right.
\end{equation*}
which implies, since $N$ is normal
\[
 T^{*m}T^nX_3=T^nT^{*m}X_3.
\]
Consequently
\[
 (T^{*m}T^n-T^nT^{*m})X_3=0,\  \ \forall m,n\in\mathbb{N},
\]
which means
\[
 Im(X_3)\subset\bigcap_{m,n\in\mathbb{N}}ker(T^{*m}T^n-T^nT^{*m}):=\mathcal{M}.
\]
Now, let $x\in\mathcal{M}$ then
\[
 T^{*m}T^n(Tx)=T^{*m}T^{n+1}x=T^{n+1}T^{*m}x=T^nT^{*m}(Tx).
\]
Thus $\mathcal{M}\in\mathcal{L}at(T)$. Furthermore, if $x\in\mathcal{M}$ then
\[
 T^{*m}T^n(T^*x)=T^{*m}(T^{n}T^*x)=T^{*m+1}T^{n}x=T^nT^{*m}(T^*x).
\]
Hence $\mathcal{M}\in\mathcal{L}at(T^*)$.
From this $\mathcal{M}$ is a reducing subspace for $T$. Therefore, there are two operators $M_1\in\mathcal{B}(\mathcal{M})$,
$M_1\in \mathcal{B}(\mathcal{M^\perp})$ such that $T=M_1\oplus M_2$. Moreover, in $\mathcal{M}\oplus\mathcal{M^\perp}$ 
the operators $TT^*$ and $T^*T$ have the following representations
\[
 TT^*=M_1M_1^*\oplus M_2M_2^*,\  \ T^*T=M_1^*M_1\oplus M_2^*M_2.
\]
Finally, let $x\in\mathcal{M}$, then $TT^*x=T^*Tx$ which implies $M_1M_1^*x=M_1^*M_1x$.
So $M_1$ is normal, and we get $\mathcal{M}=0$ since $T$ is pure. Consequently $X_3=0$
and the proof is complete. 
\end{proof}

\end{section}

\begin{section}{Extended eigenvalues and extended eigenspaces of quasi-normal operators}

%%By using the proof of Proposition \ref{criterion}, one can easily verify that if $\lambda\in\s{T}$, then
%%$\mathcal{A}_{|\lambda|}\neq\emptyset$.
The following theorem describes the spaces of extended 
eigenvectors of a pure quasi normal operator. We will use the notations introduced in Remark \ref{smbl}.
\begin{thm}\label{eigvect_pure_qn}
 Let $T$ be a pure quasinormal operator acting on a Hilbert space $\mathcal{H}$. 
Let $\lambda\in\s{T}$ then
\begin{align*}
 E_{ext}(T, \lambda)&=\mbox{weak}^{\ast}\mbox{-span}\{U_T^{\ast}(I\otimes S^m)diag(L, X_{1,1},...X_{n,n},...)U_T\\
&: m\in\mathbb{N}, L\in\mathcal{A}_{|\lambda|}(A_T)\},
\end{align*}
where $X_{n,n}$ is, for all $n\in\mathbb{N}$, the (unique bounded) extension on $\mathfrak{L}_T$ of the operator 
\[
\begin{array}{ccccc}
\breve{X}_{n,n} & : & ImA_T^n & \to & \mathfrak{L}_T \\
 & & A_T^nx & \mapsto & \lambda^{-n}A_T^nLx. \\
\end{array} 
\]
\end{thm}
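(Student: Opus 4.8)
The plan is to transfer the whole problem to the model $A_T\otimes S$ and then read off the matrix structure of the intertwiners. First I would invoke Theorem \ref{brown} and Remark \ref{smbl} to write $A_T\otimes S=U_TTU_T^{\ast}$, with $A:=A_T$ injective positive on $\mathfrak{L}_T$, and observe that substituting $A_T\otimes S=U_TTU_T^{\ast}$ into $(A_T\otimes S)Y=\lambda Y(A_T\otimes S)$ gives $T(U_T^{\ast}YU_T)=\lambda(U_T^{\ast}YU_T)T$. Hence $E_{ext}(T,\lambda)=U_T^{\ast}E_{ext}(A_T\otimes S,\lambda)U_T$, and it suffices to describe $E_{ext}(A_T\otimes S,\lambda)$ in the identification $\mathfrak{L}_T\otimes H^2\cong\oplus_{k\ge0}\mathfrak{L}_T$.

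Next I would write an intertwiner $X=(X_{i,j})_{i,j\ge0}$ as a block matrix. Since $A\otimes S$ sends the $k$-th coordinate $v$ to $Av$ in coordinate $k+1$, the relation $(A\otimes S)X=\lambda X(A\otimes S)$ collapses to the single recursion $AX_{i-1,j}=\lambda X_{i,j+1}A$ for all $i,j\ge0$ (with $X_{-1,\cdot}=0$). Since $A\otimes S$ is injective we have $\lambda\ne0$, and since $A$ is injective positive its powers have dense range; the row $i=0$ then forces $X_{0,k}=0$ for $k\ge1$, and iterating the recursion up the diagonals shows that all entries strictly above the diagonal vanish while, for $p\ge q$, $X_{p,q}A^q=\lambda^{-q}A^qL_{p-q}$ where $L_d:=X_{d,0}$. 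Because each block satisfies $\|X_{p,q}\|\le\|X\|$, this identity reads $\||\lambda|^{-q}A^qL_d x\|\le\|X\|\,\|A^qx\|$ for all $q$ and $x$, that is $L_d\in\mathcal{A}_{|\lambda|}(A_T)$ for every $d$.

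The entries on the $d$-th diagonal are thus exactly those of $(I\otimes S^d)\,\mathrm{diag}(L_d,X_{1,1},\ldots,X_{n,n},\ldots)$ with the $X_{n,n}$ of the statement (two bounded operators agreeing on the dense range of $A^q$ coincide). I would then check that each such building block lies in $E_{ext}(A\otimes S,\lambda)$: a direct computation shows that $D=\mathrm{diag}(L,X_{1,1},\ldots)$ satisfies $(A\otimes S)D=\lambda D(A\otimes S)$, which amounts to $AX_{n,n}=\lambda X_{n+1,n+1}A$ and follows after multiplying by $A^n$ and using dense range; the identity $(A\otimes S)(I\otimes S^m)=(I\otimes S^m)(A\otimes S)=A\otimes S^{m+1}$ then propagates the relation to $(I\otimes S^m)D$. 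Since the solution set of $(A\otimes S)X=\lambda X(A\otimes S)$ is a weak$^{\ast}$-closed subspace of $\lh$ (left and right multiplication by a fixed operator are weak$^{\ast}$-continuous), the inclusion $\supseteq$ follows.

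The remaining, and genuinely delicate, point is to recover an arbitrary $X$ from its diagonal pieces $X^{(d)}$ by a weak$^{\ast}$-limit, and I expect this to be the main obstacle: the band truncations $\sum_{d\le N}X^{(d)}$ are not uniformly bounded (triangular truncation is unbounded on $\lh$), so they need not converge. To get around this I would replace them by Fejér--Cesàro means built from the diagonal unitary group $W_{\theta}=I\otimes\mathrm{diag}(e^{ik\theta})_{k\ge0}$: conjugation $W_{\theta}XW_{-\theta}$ multiplies the $d$-th diagonal by $e^{id\theta}$, so $\sigma_N(X)=\frac{1}{2\pi}\int_0^{2\pi}K_N(\theta)\,W_{\theta}XW_{-\theta}\,d\theta=\sum_{d=0}^{N}\bigl(1-\tfrac{d}{N+1}\bigr)X^{(d)}$ for the Fejér kernel $K_N$. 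These are finite combinations of building blocks, satisfy $\|\sigma_N(X)\|\le\|X\|$ as averages of conjugates of $X$, and converge to $X$ in the weak operator topology by Fejér's theorem applied to the continuous function $\theta\mapsto\langle W_{\theta}XW_{-\theta}\xi,\eta\rangle$. Since the weak$^{\ast}$ and weak operator topologies agree on bounded sets, $\sigma_N(X)\to X$ weak$^{\ast}$, so $X$ lies in the weak$^{\ast}$-closed span of the building blocks. Conjugating the resulting description of $E_{ext}(A_T\otimes S,\lambda)$ by $U_T$ yields the theorem.
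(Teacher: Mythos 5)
Your proposal is correct and follows essentially the same route as the paper: reduction to $A_T\otimes S$ via $U_T$, block-matrix analysis forcing lower triangularity (your direct entrywise recursion replaces the paper's $J_0$ composition trick, but proves the same thing), the diagonal identity $X_{p,q}A^q=\lambda^{-q}A^qL_{p-q}$ placing each $L_d$ in $\mathcal{A}_{|\lambda|}(A_T)$, and Ces\`aro means of the diagonal pieces for the weak$^{\ast}$ recovery. The one notable addition is that you actually prove the convergence $\sigma_N(X)\to X$ weak$^{\ast}$ --- via the unitary group $W_{\theta}$, positivity and normalization of the Fej\'er kernel for the uniform bound $\|\sigma_N(X)\|\leq\|X\|$, and the agreement of the weak$^{\ast}$ and weak operator topologies on bounded sets --- a step the paper merely asserts with ``we can prove that,'' and which is the same averaging device the paper deploys later in Section 5 for the bilateral-shift case.
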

\begin{proof}
As usual, we set $A:=A_T$, $\mathfrak{L}_T=\mathcal{H}\ominus V_T\mathcal{H}$ and 
$\mathcal{A}_{|\lambda|}:=\mathcal{A}_{|\lambda|}(A_T)$. 
Let $X_0\in\mathcal{B}(\mathcal{H})$ be a nonzero solution of $TX_0=\lambda X_0T$. Then, we have seen that 
$X_0=U_T^{\ast}XU_T$ where $X \in \mathcal{B}(\mathfrak{L}_T\otimes H^2)$ is 
solution of the equation $ (A\otimes S)X= \lambda X(A\otimes S)$.
Let $(A_{i,j})_{i,j\geq0}$ be the matrix of $A \otimes S$ in $\mathfrak{L}\otimes H^2$, i.e.,
\begin{equation*}
 A_{i,j} = \left\{
  \begin{array}{l l}
 A & \quad \text{if $i=j+1$}\\
 0 & \quad \text{otherwise.}\\
  \end{array} \right.
\end{equation*}
Consider for all $\alpha\in\overline{\mathbb{D}}$, the operator $J_\alpha$ whose the matrix in $\mathfrak{L}\otimes H^2$
is defined by
\begin{equation*}
 (J_\alpha)_{i,j} = \left\{
  \begin{array}{l l}
 \alpha^iI & \quad \text{if $i=j$}\\
 0 & \quad \text{otherwise.}\\
  \end{array} \right.
\end{equation*}
Then one can verify that $J_\alpha (A\otimes S)=\alpha (A\otimes S)J_\alpha$. In particular, $J_0 (A\otimes S)=0$.
Now let $\lambda \in \s{T}$ and let $X=(X_{i,j})$ be a nonzero operator 
acting on $\mathfrak{L}\otimes H^2$, and $\lambda\in\mathbb{C}$ (necessarily nonzero)
verifying $(A\otimes S)X=\lambda X(A\otimes S)$. A left composition by $J_0$ gives 
\[
 0=J_0 (A\otimes S)X=\lambda J_0X (A\otimes S)=J_0X (A\otimes S).
\]
But
\begin{equation*}
 (J_0X(A\otimes S))_{i,j} = \left\{
  \begin{array}{l l}
 X_{0,j+1}A & \quad \text{if $i=0$}\\
 0 & \quad \text{otherwise,}\\
  \end{array} \right.
\end{equation*}
which implies $X_{0,j+1}=0$ for all $j$, since $A$ has dense range.
In addition, we know that $(A\otimes S)X=\lambda X(A\otimes S)$ implies $(A\otimes S)^nX=\lambda^n X(A\otimes S)^n$ 
for all $n\in\mathbb{N}$. A same process
gives $X_{n,m}=0$ for all $0<n<m$. Consequently, $X$ has a lower triangular matrix.\\
Thus, if we denote by $X(m)$ the 
operator whose the matrix is 
\begin{equation*}
 (X(m))_{i,j} = \left\{
  \begin{array}{l l}
 X_{i,j} & \quad \text{if $j=m+i$}\\
 0 & \quad \text{otherwise,}\\
  \end{array} \right.
\end{equation*}
for all $m\in\mathbb{Z}$. 
We can prove that
\[
X=\underset{n\rightarrow +\infty}{weak^{\ast}lim} \left(\sum_{k=0}^{n}\left(1-\frac{k}{n+1}\right)X(-k)\right). 
\] 
Moreover, we observe that there exists an operator $Y$ acting on $\mathfrak{L}\otimes H^2$ such that
$X(-n)=(I\otimes S^n)(Y(0))$
for all $n\in\mathbb{N}$. Furthermore, one can verify that
$(A\otimes S)X(-n)=\lambda X(-n)(A\otimes S)$ if and only if 
$(A\otimes S)Y(0)=\lambda Y(0)(A\otimes S)$. Therefore, we are reduced to examine the case where $X=X(0)$.\\
We have 
\begin{equation*}
 ((A\otimes S)X(0))_{i,j} = \left\{
  \begin{array}{l l}
 AX_{i-1,i-1} & \quad \text{if $i=j+1$}\\
 0 & \quad \text{otherwise,}\\
  \end{array} \right.
\end{equation*}
and
\begin{equation*}
 (\lambda X(0)(A\otimes S))_{i,j} = \left\{
  \begin{array}{l l}
 \lambda X_{i,i}A & \quad \text{if $i=j+1$}\\
 0 & \quad \text{otherwise.}\\
  \end{array} \right.
\end{equation*}
Hence, for all $n\in\mathbb{N}$ we have $AX_{n,n}=\lambda X_{n+1,n+1}A$.
Thus, we get for all $n$, $\lambda^{-n}A^nX_{0,0}=X_{n,n}A^n$. On the one hand, 
since the range of $A$ is dense, it implies
that $X_{0,0}$ is necessarily non-null. On the other hand, we see that
\[
 |\lambda|^{-n}||A^nX_{0,0}x||\leq  |X||||A^nx||,\   \ \forall n\in\mathbb{N},\ \ \forall x\in\mathfrak{L}.
\]
Consequently, $X_{0,0}$ is a non-zero element of $\mathcal{A}_{|\lambda|}:=\mathcal{A}_{|\lambda|}(A)$.

Reciprocally, if $L\in\mathcal{A}_{|\lambda|}  \setminus \{0\}$, we define for all $n\in\mathbb{N}$, the operator  
\[
\begin{array}{ccccc}
\breve{X}_{n,n} & : & ImA^n & \to & \mathfrak{L} \\
 & & A^nx & \mapsto & \lambda^{-n}A^nLx. \\
\end{array} 
\]
Since $L\in\mathcal{A}_{|\lambda|}$, there is $c\geq0$ such that for all $y\in ImA^n$, 
$||\breve{X}_{n,n}y||\leq c||y||$. Also, $ImA^n$ is dense in $\mathfrak{L}$, thus $\breve{X}_{n,n}$
has a (unique bounded) extension on $\mathfrak{L}$, which will be denoted by $X_{n,n}$.
It remains to verify that $AX_{n,n}=\lambda X_{n+1,n+1}A$, for all $n\in\mathbb{N}$. So, let $x\in\mathfrak{L}$,
and $y=A^nx$, then
\begin{align*}
 AX_{n,n}y&=AX_{n,n}A^nx=A\breve{X}_{n,n}A^nx=\lambda^{-n}A^{n+1}Lx\\
 &=\lambda\lambda^{-n-1}A^{n+1}Lx=\lambda\breve{X}_{n+1,n+1}A^{n+1}x=\lambda X_{n+1,n+1}Ay,
\end{align*}
by density, we get $AX_{n,n}=\lambda X_{n+1,n+1}A$, which implies $X\in E_{ext}(T,\lambda) \setminus \{0\}$, as we wanted.

%%if $|\lambda|<1/||A^{-1}||||A||$ then $X_{0,0}=0$ which implies that $X_{n,n}=0$ for all $n$.
%%So, $\lambda$ must be in $\mathbb{D}(0,\frac{m_A}{||A||}[^c$ and the proof is complete.

%Let $R$ be a self-adjoint operator acting on a Hilbert space $H$, we denote by $m_R$ the real number defined by 
%\[
%m_R=\inf\{<Rx,x> : ||x||=1\}. 
% \]
%We observe that $m_R=1/||R^{-1}||$ when $R$ is an invertible positive operator. 

\end{proof}
\begin{rmq}
 Let $A$ be an injective positive operator on a Hilbert space $\mathfrak{L}$ and let $|\lambda|\leq1$, 
then we can easily verify that 
$\mathcal{A}_{|\lambda|}:=\mathcal{A}_{|\lambda|}(A)$ is an algebra, that which is not true in general, when $|\lambda|>1$
(see Example (\ref{contex})).
%Moreover, in the case of the positive operator $A$ is invertible, the set $\mathcal{A}_{|\lambda|}$ is defined by
%\[
% \mathcal{A}_{|\lambda|}=\{L\in\mathcal{B}(\mathfrak{L}) : \sup_{n\in\mathbb{N}}||\lambda^{-n}A^nLA^{-n}||<+\infty\}.
%\]
Recall that $\mathcal{A}_{1}$ is the Deddens algebra given in \cite{dedalg}.
Finally, If we denote by $D_{A,L,\lambda}$ the operator defined on $\mathfrak{L}\otimes H^2$
by 
\begin{equation*}
 (D_{A,L,\lambda})_{i,j\in\mathbb{N}} = \left\{
  \begin{array}{l l}
 \lambda^{-i}A^iLA^{-i}& \quad \text{if $i=j$}\\
 0 & \quad \text{otherwise.}\\
  \end{array} \right.
\end{equation*}
Then, we get the following corollary.
\end{rmq}
\begin{coro}\label{eigvec2}
 Let $A$ be an invertible positive operator on a Hilbert space $\mathfrak{L}$, and $T=A\otimes S$. 
If $\lambda\in\s{T}$ then
\[
E_{ext}(T,\lambda)=\mbox{weak}^{\ast}\mbox{-span}\{(I\otimes S^n) D_{A,L,\lambda} : n\in\mathbb{N}, L\in\mathcal{A}_{|\lambda|}(A)\}. 
\]
\end{coro}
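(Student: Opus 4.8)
The plan is to derive this corollary directly from Theorem \ref{eigvect_pure_qn} by specializing to the case where the pure quasinormal operator is already presented in canonical form and its modulus is \emph{invertible}. First I would record that when $T=A\otimes S$ the data attached to $T$ in Remark \ref{smbl} are completely explicit. Since $A$ is positive one computes $T^{*}T=A^{2}\otimes I$, whence $|T|=A\otimes I$ and the partial isometry in the polar decomposition is $V_{T}=I\otimes S$. Consequently $V_{T}\mathcal{H}=\mathfrak{L}\otimes SH^{2}$ and $\mathfrak{L}_{T}=\mathcal{H}\ominus V_{T}\mathcal{H}=\mathfrak{L}\otimes\ker S^{*}$, which under the natural identification $\ker S^{*}\cong\mathbb{C}$ is just $\mathfrak{L}$; moreover $A_{T}$, the restriction of $|T|$ to $\mathfrak{L}_{T}$, equals $A$ under this identification. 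In particular the unitary $U_{T}$ of Theorem \ref{brown} may be taken to be the identity, and $\mathcal{A}_{|\lambda|}(A_{T})=\mathcal{A}_{|\lambda|}(A)$.

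Next I would exploit the invertibility of $A$ to make the diagonal blocks $X_{n,n}$ of Theorem \ref{eigvect_pure_qn} explicit. Since $A$ is invertible, so is $A^{n}$ for every $n$, and hence $Im\,A^{n}=\mathfrak{L}$. This means the densely defined operator $\breve{X}_{n,n}$ is already defined on all of $\mathfrak{L}$: writing an arbitrary $y\in\mathfrak{L}$ as $y=A^{n}x$ with $x=A^{-n}y$, its definition gives
\[
X_{n,n}y=\breve{X}_{n,n}y=\lambda^{-n}A^{n}LA^{-n}y,
\]
so that $X_{n,n}=\lambda^{-n}A^{n}LA^{-n}$ for every $n\in\mathbb{N}$ (with $X_{0,0}=L$). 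Boundedness is automatic here, and in fact the membership $L\in\mathcal{A}_{|\lambda|}(A)$ is precisely the condition $\sup_{n}\|\lambda^{-n}A^{n}LA^{-n}\|<+\infty$ from the invertible-case definition of $\mathcal{A}_{|\lambda|}$.

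Finally I would assemble the pieces. The identities of the previous step show that the diagonal operator $diag(L,X_{1,1},\ldots,X_{n,n},\ldots)$ appearing in Theorem \ref{eigvect_pure_qn} is exactly the operator $D_{A,L,\lambda}$ introduced in the preceding Remark, whose $i$-th diagonal block is $\lambda^{-i}A^{i}LA^{-i}$. Substituting this together with $U_{T}=I$ into the formula of Theorem \ref{eigvect_pure_qn} yields
\[
E_{ext}(T,\lambda)=\mbox{weak}^{\ast}\mbox{-span}\{(I\otimes S^{n})D_{A,L,\lambda}:n\in\mathbb{N},\ L\in\mathcal{A}_{|\lambda|}(A)\},
\]
which is the asserted description. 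There is no genuinely hard step here; the only points requiring care are the identification $U_{T}=I$ (i.e.\ checking that $A\otimes S$ really is its own canonical Brown model) and the verification that the unique bounded extension $X_{n,n}$ coincides with $\lambda^{-n}A^{n}LA^{-n}$ on all of $\mathfrak{L}$, which is exactly where the hypothesis that $A$ be invertible, rather than merely injective, is used.
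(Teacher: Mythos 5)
Your proposal is correct and follows essentially the same route the paper intends: Corollary \ref{eigvec2} is presented there as an immediate consequence of Theorem \ref{eigvect_pure_qn} together with the definition of $D_{A,L,\lambda}$ in the preceding remark, and your specialization (checking that for $T=A\otimes S$ one has $|T|=A\otimes I$, $V_T=I\otimes S$, $\mathfrak{L}_T\cong\mathfrak{L}$, $A_T=A$, $U_T=I$, and that invertibility of $A$ gives $X_{n,n}=\lambda^{-n}A^nLA^{-n}$, so that $diag(L,X_{1,1},\ldots)=D_{A,L,\lambda}$) is exactly the verification the paper leaves implicit. You in fact supply the details the paper omits, correctly isolating where invertibility, rather than mere injectivity, of $A$ is used.
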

%\begin{proof}
 %Let $X\in\mathcal{B}(\mathfrak{L})$ a nonzero solution of $TX=\lambda XT$. As in the proof of Proposition \ref{criterion}, we get
%$X=\sum_{n\in\mathbb{N}}X(-n)$, and for all $n\in\mathbb{N}$, there exists an operator $Y$ such that
%\[
 %X(-n)=(I\otimes S^n)(Y(0)).
%\]
%Thus, we reduce to the case of $X=X(0)$. Moreover, we got 
%\begin{equation}\label{solt}
 %X=diag(X_{0,0},\lambda^{-1}AX_{0,0}A, ...,\lambda^{-n} A^nX_{0,0}A^{-n}, ...).
%\end{equation}
%In particular
%\[
 %\sup_{n\in\mathbb{N}}||\lambda^{-n}A^nLA^{-n}||=||X||<+\infty
%\]
%\end{proof}

\begin{ex}
 Let $T=A\otimes S$ such that
\[
 A= \begin{bmatrix}
\alpha & 0 \\
0 & \beta \\
\end{bmatrix}
,\  \  \alpha>\beta>0, 
\]
and let 
\[
 L= \begin{bmatrix}
a & b \\
c & d \\
\end{bmatrix}
,\  \  a,b,c,d\in\mathbb{C}.
\]
Then, if $\lambda\in\mathbb{C}^*$ we get
\[
 \lambda^{-n} A^nLA^{-n}= \begin{bmatrix}
\lambda^{-n}a & (\frac{\alpha}{\lambda\beta})^nb \\
(\frac{\beta}{\lambda\alpha})^nc & \lambda^{-n}d \\
\end{bmatrix}
. 
\]
So we distinguish the following cases :
\begin{enumerate}
 \item if $|\lambda|\geq\frac{\alpha}{\beta}>1$, 
 then $\mathcal{A}_{|\lambda|}=\mathcal{B}(\mathfrak{L})=\mathcal{M}_2(\mathbb{C})$.
 \item if $1\leq|\lambda|<\frac{\alpha}{\beta}$, then 
 \[
 \mathcal{A}_{|\lambda|}=\{\begin{bmatrix}
a & 0 \\
c & d \\
\end{bmatrix}
,\  \  a,c,d\in\mathbb{C}\}.
\]
 \item if $\frac{\beta}{\alpha}\leq|\lambda|<1$, then 
 \[
 \mathcal{A}_{|\lambda|}=\{\begin{bmatrix}
0 & 0 \\
c & 0 \\
\end{bmatrix}
,\  \  c\in\mathbb{C}\}.
\]
 \item if $|\lambda|<\frac{\beta}{\alpha}$, then $\mathcal{A}_{|\lambda|}=\{0\}$.
\end{enumerate}

\end{ex}

\begin{ex}\label{contex}
 Let $T=A\otimes S$ such that
\[
 A= \begin{bmatrix}
\alpha & 0 & 0 \\
0 & \beta & 0 \\
0 & 0 & \gamma \\
\end{bmatrix}
,\  \  \alpha>\beta>\gamma>0, 
\]
and let 
\[
 L_1= \begin{bmatrix}
0 & 1 & 0 \\
0 & 0 & 0 \\
0 & 0 & 0 \\
\end{bmatrix}
,\  \  L_2= \begin{bmatrix}
0 & 0 & 0 \\
0 & 0 & 1 \\
0 & 0 & 0 \\
\end{bmatrix}.
\]
Then, one can verify that
\[
||\lambda^{-n}A^nL_1A^{-n}||=|\lambda|^{-n}(\frac{\alpha}{\beta})^n,
\]
\[
||\lambda^{-n}A^nL_2A^{-n}||=|\lambda|^{-n}(\frac{\beta}{\gamma})^n,
\]
and
\[
||\lambda^{-n}A^nL_1L_2A^{-n}||=|\lambda|^{-n}(\frac{\alpha}{\gamma})^n.
\]
Now, let $\alpha,\beta,\gamma$ and $\lambda$ be such that 
\[
\frac{\alpha}{\beta}=\frac{\beta}{\gamma}=|\lambda|,
\]
then, clearly $L_1,L_2\in\mathcal{A}_{|\lambda|}$, but $L_1L_2\notin\mathcal{A}_{|\lambda|}$.
\end{ex}

In the next result, we describe completely the extended eigenspaces of a general quasinormal operator.
\begin{thm}\label{eigvect_qn}
 Let $R$ be an injective quasinormal operator on a Hilbert space $\tilde{\mathcal{H}}$, and consider
 $R=N\oplus T \in\mathcal{B}(\mathcal{E}\oplus\mathcal{H})$ the canonical decomposition 
 of $R$ into a direct sum of a normal
 operator $N \in \mathcal{B}(\mathcal{E})$ and a pure quasinormal operator $T \in\mathcal{B}(\mathcal{H})$.
 Let $\lambda\in\s{R}$ then $E_{ext}(R,\lambda)$ is the following subspace of $\mathcal{B}(\mathcal{E}\oplus\mathcal{H})$
\[
\{\begin{bmatrix}
U &VU_T \\
0 & W\\
\end{bmatrix}: U \in E_{ext}(N,\lambda), V \in E_{int}(N,A_T\otimes S,\lambda), W \in E_{ext}(T,\lambda)\}
\]
where $E_{int}(N,A_T \otimes S,\lambda)$ is the set of operators 
$V \in \mathcal{B}( \oplus_{k=0}^{+\infty} \mathfrak{L}_T,\mathcal{E})$ 
whose matricial form are given by
$V=[V_0,\ldots,V_n,\ldots]$, 
where $V_{0} \in \mathcal{A}_{|\lambda|}(N,A_T)$ and
 $V_{n}$ is, for all $n\in\mathbb{N^{\ast}}$, the (unique bounded) extension on $\mathfrak{L}$ of the operator 
\[
\begin{array}{ccccc}
\breve{V}_{n} & : & ImA^n & \to & \mathcal{E} \\
 & & A^nx & \mapsto & \lambda^{-n}N^n V_{0}x. \\
\end{array} 
\]
\end{thm}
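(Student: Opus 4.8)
The plan is to obtain the statement as a synthesis of three results already in hand: the triangularisation of Theorem \ref{subnl}, the unitary model $T=U_T^{\ast}(A_T\otimes S)U_T$ recorded in Remark \ref{smbl}, and the parametrisation of intertwining operators furnished by Proposition \ref{criterion}. Throughout I write $A:=A_T$. First I would take an arbitrary $X\in E_{ext}(R,\lambda)$ and expand it as
\[
X=\begin{bmatrix} X_1 & X_2 \\ X_3 & X_4 \end{bmatrix}
\]
relative to $\mathcal{E}\oplus\mathcal{H}$. Since $R=N\oplus T$, the equation $RX=\lambda XR$ splits into the same four scalar equations that govern the proof of Theorem \ref{subnl}. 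Because $R$ is injective both $N$ and $T$ are injective, and a pure quasinormal operator is in particular a pure subnormal operator, so Theorem \ref{subnl} applies directly and yields $X_3=0$ together with $X_1\in E_{ext}(N,\lambda)$, $X_4\in E_{ext}(T,\lambda)$ and $X_2\in E_{int}(N,T,\lambda)$. Setting $U:=X_1$ and $W:=X_4$ already supplies the two diagonal entries.

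Next I would transport the off-diagonal block through the unitary $U_T$. Given $X_2\in E_{int}(N,T,\lambda)$, I would put $V:=X_2U_T^{\ast}$, so that $X_2=VU_T$; substituting $T=U_T^{\ast}(A\otimes S)U_T$ into $NX_2=\lambda X_2T$ and cancelling the unitary factor $U_T$ on the right turns the relation into $NV=\lambda V(A\otimes S)$, that is $V\in E_{int}(N,A\otimes S,\lambda)$. The computation is reversible, so $X_2\mapsto X_2U_T^{\ast}$ is a linear bijection between $E_{int}(N,T,\lambda)$ and $E_{int}(N,A\otimes S,\lambda)$, which places the $(1,2)$ entry in the announced shape $VU_T$. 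Conversely, any triple $(U,V,W)$ drawn from the three prescribed spaces reassembles into a matrix solving $RX=\lambda XR$: the vanishing of the $(2,1)$ block decouples the four equations, and the $(1,2)$ equation is verified by reversing the above manipulation. Hence the displayed set of matrices coincides with $E_{ext}(R,\lambda)$, and it is a subspace, being the image of a product of subspaces under a linear map.

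Finally I would read the explicit matricial description of $E_{int}(N,A\otimes S,\lambda)$ straight off the proof of Proposition \ref{criterion}, applied with the injective quasinormal operator taken to be the normal operator $N$ and with the positive operator $A_T$. Writing $V=[V_0,\ldots,V_n,\ldots]$ as a row over $\oplus_{k=0}^{\infty}\mathfrak{L}_T$, the relation $NV=\lambda V(A\otimes S)$ forces $NV_k=\lambda V_{k+1}A$ and hence $N^nV_0=\lambda^n V_nA^n$ for every $n$. Density of $\mathrm{Im}\,A^n$ then shows that each $V_n$ with $n\geq1$ is the unique bounded extension of $A^nx\mapsto\lambda^{-n}N^nV_0x$, while the estimate $\||\lambda|^{-n}N^nV_0x\|\leq\|V\|\,\|A^nx\|$ coming from the same identity says exactly that $V_0\in\mathcal{A}_{|\lambda|}(N,A_T)$; conversely any $V_0$ in this set generates a bounded $V$ by the construction of Proposition \ref{criterion}. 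This is the stated parametrisation.

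The substance of the argument lies in the earlier results, so I expect no serious analytic difficulty; the points that need genuine care are structural: checking that a pure quasinormal operator is admissible as the pure subnormal operator in Theorem \ref{subnl}, and making sure the conjugation $V=X_2U_T^{\ast}$ intertwines $N$ with $A_T\otimes S$ rather than with $T$. Boundedness of the extensions $V_n$ is the only analytic input, and it is guaranteed precisely by the membership $V_0\in\mathcal{A}_{|\lambda|}(N,A_T)$.
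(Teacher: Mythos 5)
Your proof is correct and follows essentially the same route as the paper's: Theorem \ref{subnl} to force the block upper-triangular form and identify the diagonal entries $U\in E_{ext}(N,\lambda)$, $W\in E_{ext}(T,\lambda)$, the factorisation $X_2=VU_T$ through the unitary $U_T$ of Remark \ref{smbl} turning $NX_2=\lambda X_2T$ into $NV=\lambda V(A_T\otimes S)$, and the row expansion $V=[V_0,\ldots,V_n,\ldots]$ with $N^nV_0=\lambda^nV_nA_T^n$, yielding $V_0\in\mathcal{A}_{|\lambda|}(N,A_T)$ and the $V_n$ as bounded extensions exactly as in Proposition \ref{criterion}. You merely make explicit two steps the paper treats as immediate, namely the cancellation of $U_T$ and the converse verification that any triple $(U,V,W)$ reassembles into an extended eigenvector.
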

\begin{proof}
Let $X$ be an extended eigenvector of $R$ associated with the extended eigenvalue $\lambda$. According to 
Theorem \ref{subnl} it suffices to describe the upper off-diagonal 
coefficient $X_2$ in the matrix representation of $X$ with respect to the orthogonal direct sum 
$\tilde{\mathcal{H}}= \mathcal{E}\oplus\mathcal{H}$. Clearly, we have $X_2=VU_T$ where 
$V=[V_0,\ldots,V_n,\ldots] \in E_{int}(N,A_T \otimes S,\lambda)$. 
For convenience, we write $A=A_T$. We see that
$N^nV_0=\lambda^{n}V_nA^n$ for every $n\in \mathbb{N}$. Thus, we have $||N^nV_0x||\leq ||V|||\lambda|^n||A^nx||$ and
hence $V_{0} \in \mathcal{A}_{|\lambda|}(N,A)$. 

Conversely, by using assumptions, we get $NV=\lambda A\otimes S$ and any matrix of the form
\[
X=\begin{bmatrix}
U &VU_T \\
0 & W\\
\end{bmatrix},
\]
where $ U \in E_{ext}(N,\lambda)$ and $ W \in E_{ext}(T,\lambda)\}$, 
is an extended eigenvector of $R$. It ends the proof.
\end{proof}

We can now describe the extended spectrum of a general quasinormal operator.
\begin{coro}\label{extspect_qn}
 Let $R$, $N$, $T$ and $\tilde{\mathcal{H}}$ be as in the last theorem, then 
 we have
\[
 \s{R}=\s{N\oplus T}=\s{N} \cup \mathbb{D}(0,\frac{m_{|N|}\wedge m_{|T|}}{||T||}[^c
\]
if one of the following assumptions holds:\\
- a) $m_{|N|}<m_{|T|}$ and $(m_{|N|},||T||) \in \sigma_p(|N|)\times \sigma_p(|T|)$;\\
- b) $m_{|N|}=m_{|T|}$, and $(m_{|N|},||T||) \in \sigma_p(|N|)\times \sigma_p(T)$ 
 or $(m_{|T|},||T||) \in \sigma_p(|T|)^2$;\\
- c) $m_{|N|}>m_{|T|}$ and $(m_{|T|},||T||) \in \sigma_p(|T|)^2.$\\
Else we have
\[
 \s{R}=\s{N} \cup \mathbb{D}(0,\frac{m_{|N|}\wedge m_{|T|}}{||T||}]^c.
\]
\end{coro}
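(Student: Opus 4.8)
The plan is to reduce the computation of $\s{R}$ for the direct sum $R = N \oplus T$ to the two pieces already understood: the normal summand $N$, handled in \cite{casalk}, and the pure quasinormal summand $T$, handled by Corollary \ref{extended spectrum}. The starting point is Theorem \ref{eigvect_qn}, which says that every extended eigenvector of $R$ at $\lambda$ has the block form $\begin{bmatrix} U & VU_T \\ 0 & W \end{bmatrix}$ with $U \in E_{ext}(N,\lambda)$, $W \in E_{ext}(T,\lambda)$ and $V \in E_{int}(N, A_T\otimes S, \lambda)$. Consequently $\lambda$ is an extended eigenvalue of $R$ if and only if \emph{at least one} of the three spaces $E_{ext}(N,\lambda)$, $E_{ext}(T,\lambda)$, $E_{int}(N, A_T\otimes S,\lambda)$ is nonzero (the zero matrix is not an extended eigenvector, but any nonzero choice of one block gives a genuine one, since $X_3=0$ is always admissible). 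This yields the set-theoretic identity
\[
\s{R} = \s{N} \cup \s{T} \cup \Lambda_{int}(N, A_T\otimes S).
\]

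First I would dispose of the easy inclusions. We always have $\s{N} \subseteq \s{R}$ and $\s{T}\subseteq \s{R}$ by taking $V=W=0$ (resp. $U=V=0$). For the $\Lambda_{int}$ term, I would invoke Proposition \ref{criterion} in the form proved there: $\lambda \in \Lambda_{int}(N, A_T\otimes S)$ iff $\mathcal{A}_{|\lambda|}(N, A_T) \neq \{0\}$. The key computational step, exactly mirroring the second step of the proof of Theorem \ref{intertwining values}, is to determine the threshold radius for which $\mathcal{A}_{|\lambda|}(N,A_T)$ is nonempty. Since $N$ is normal, $|N^n| = |N|^n$ and the same positivity estimate $m_{|N|}^{2n} L^*L \le L^* N^{*n}N^n L \le C^2|\lambda|^{2n} A_T^{2n}$ holds, so taking $n$-th roots and letting $n\to\infty$ forces $|\lambda| \ge m_{|N|}/\|A_T\| = m_{|N|}/\|T\|$. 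Conversely, picking approximate eigenvectors of $|N|$ near $m_{|N|}$ and of $A_T$ near $\|A_T\|$ and forming a rank-one $L = a\otimes b$ produces a nonzero element of $\mathcal{A}_{|\lambda|}(N,A_T)$ for every $|\lambda| > m_{|N|}/\|T\|$. Hence $\Lambda_{int}(N,A_T\otimes S)$ is a complement of a disc of radius $m_{|N|}/\|T\|$, open or closed according to whether the extremal spectral values are eigenvalues.

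The main work is then bookkeeping: combining $\s{N}$, $\s{T} = \mathbb{D}(0, m_{|T|}/\|T\|)^c$ (from Corollary \ref{extended spectrum}, using $\|T\|=\|A_T\|$), and the just-computed $\Lambda_{int}$ term, each being the complement of a disc (or a union involving $\s{N}$), and tracking the \emph{smaller} radius $\big(m_{|N|}\wedge m_{|T|}\big)/\|T\|$. The union of two disc-complements is the complement of the smaller disc, so the radius is governed by $\min(m_{|N|}, m_{|T|})$; this is where the trichotomy a)--c) enters. The delicate point is the \emph{boundary circle} $\mathcal{C}(0, (m_{|N|}\wedge m_{|T|})/\|T\|)$: whether it lies in $\s{R}$ depends on whether the minimizing quantity is \emph{attained} as an eigenvalue at both the $m$-end and the $\|\cdot\|$-end. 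The hard part will be checking each of the three cases carefully, since the boundary may be contributed by $\s{T}$, by $\Lambda_{int}(N,A_T\otimes S)$, or by $\s{N}$ itself, and the point-spectrum conditions must be matched exactly to the right summand (this is precisely the content of the hypotheses a), b), c) versus the ``else'' case). I would handle the circle by the same limiting-spectral-projection argument used in part (2) of Theorem \ref{intertwining values}, applied to whichever summand realizes the minimal radius.
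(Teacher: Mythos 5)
Your proposal is correct and takes essentially the same route as the paper: the decomposition $\s{R}=\s{N}\cup\s{T}\cup\Lambda_{int}(N,A_T\otimes S)$ obtained from Theorem \ref{eigvect_qn}, followed by Corollary \ref{extended spectrum} for $\s{T}$ and the threshold/boundary-circle analysis for the intertwining term, with the trichotomy a)--c) arising from which disc-complement has the smaller radius and whether the extremal spectral values are attained as eigenvalues. The only cosmetic difference is that you re-derive the radius $m_{|N|}/\|T\|$ for $\Lambda_{int}(N,A_T\otimes S)$ by mirroring the proof of Theorem \ref{intertwining values}, whereas the paper simply applies that theorem with $R=N$ (legitimate, since an injective normal operator is in particular injective quasinormal), so no new computation is actually needed there.
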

\begin{proof}
 Using Theorem \ref{eigvect_qn}, we see that 
\[
 \s{R}=\s{N}\cup \s{T} \cup \Lambda_{int}(N,A_T \otimes S). 
\]
%%Using similar arguments as in the proof of Theorem \ref{eigvect_qn}, we show that 
%%$\Lambda_\lambda(N,A_T\otimes S)=\{\lambda: \mathcal{A}_{|\lambda|}(N,A) \neq \{0\} \}$.
Taking into account Corollary \ref{extended spectrum}, we see that the proof rests on an application of Theorem \ref{intertwining values}.
\end{proof}

\end{section}
\begin{section}{Lifting of eigenvectors of pure quasi-normal operators}
 In \cite[Theorems 1 and 3]{yosh}, the author gives a necessary and sufficient 
 condition that an operator commuting with a quasinormal operator 
have an extension commuting with the normal extension of the quasinormal operator. 
In Theorem \ref{gener} we generalize this to 
operators intertwining two quasinormal operators. 
First we introduce the following proposition (see \cite{embry} for the proof).
\begin{prop}\label{cnas}
 Let $T_i\in\mathcal{B}(\mathcal{H}_i)$ be subnormal operator with m.n.e. $N_i\in\mathcal{B}(\mathcal{K}_i)$, $i=1,2$, 
and let $X\in\mathcal{B}(\mathcal{H}_2,\mathcal{H}_1)$. The following are equivalent :
\begin{enumerate}
 \item $X$ has a (unique) extension $\hat{X}\in\mathcal{B}(\mathcal{K}_2,\mathcal{K}_1)$ 
 such that $N_1\hat{X}=\hat{X}N_2$.
 \item There exists a constant $c\geq0$ such that
\[
 \sum_{i,j=0}^n<T_1^iXx_j,T_1^jXx_i>\leq c\sum_{i,j=0}^n<T_2^ix_j,T_2^jx_i>
\]
for all finite set $\{x_0,...,x_n\}$ in $\mathcal{H}_2$. 
\end{enumerate}
Moreover, if (2) holds, then $T_1X=XT_2$.
\end{prop}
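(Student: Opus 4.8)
The plan is to realise the minimal normal extensions concretely through the identity that, for any subnormal $T$ with m.n.e. $N$ on $\mathcal{K}$ and any finite family $h_0,\dots,h_n$ in $\mathcal{H}$,
\[
\left\| \sum_{i=0}^n N^{*i} h_i \right\|^2 = \sum_{i,j=0}^n \langle T^i h_j, T^j h_i\rangle .
\]
This follows at once from the normality of $N$ (so that $N^{j} N^{*i} = N^{*i} N^{j}$) together with the invariance $N\mathcal{H}\subseteq\mathcal{H}$ and $N|_{\mathcal{H}}=T$, which turn $\langle N^{*i} h_i, N^{*j} h_j\rangle$ into $\langle T^{j} h_i, T^{i} h_j\rangle$; summing and relabelling $i\leftrightarrow j$ produces the stated symmetric form. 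Minimality guarantees that $\mathcal{D}:=\Span\{N^{*k}h : k\in\mathbb{N},\, h\in\mathcal{H}\}$ is dense in $\mathcal{K}$. Applying the identity to $(T_1,N_1)$ with vectors $Xx_i$ and to $(T_2,N_2)$ with vectors $x_i$, one sees that condition (2) is exactly the estimate
\[
\left\| \sum_{i=0}^n N_1^{*i} X x_i \right\|^2 \leq c \left\| \sum_{i=0}^n N_2^{*i} x_i \right\|^2
\]
for all finite families in $\mathcal{H}_2$. This reformulation is the hinge of the whole argument.

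For $(1)\Rightarrow(2)$, assume $\hat X$ extends $X$ and satisfies $N_1\hat X=\hat X N_2$. Since $N_1,N_2$ are normal, the Fuglede--Putnam theorem yields $N_1^{*}\hat X=\hat X N_2^{*}$, hence $N_1^{*i}\hat X=\hat X N_2^{*i}$ for every $i$. As $\hat X x_i=Xx_i$ for $x_i\in\mathcal{H}_2$, we obtain $\hat X\bigl(\sum_i N_2^{*i}x_i\bigr)=\sum_i N_1^{*i}Xx_i$, and taking norms gives the displayed estimate with $c=\|\hat X\|^2$, which by the identity above is precisely (2).

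For $(2)\Rightarrow(1)$, use the estimate to \emph{define} $\hat X$ on the dense subspace $\mathcal{D}_2\subseteq\mathcal{K}_2$ by $\hat X\bigl(\sum_i N_2^{*i}x_i\bigr)=\sum_i N_1^{*i}Xx_i$. The inequality simultaneously shows that $\hat X$ is well defined (the right-hand side vanishes whenever the argument does) and bounded with $\|\hat X\|\leq\sqrt{c}$, so $\hat X$ extends continuously to $\mathcal{K}_2$; taking $x_0=x$ and the remaining $x_i=0$ gives $\hat X|_{\mathcal{H}_2}=X$. A direct computation on $\mathcal{D}_2$ gives $N_1^{*}\hat X=\hat X N_2^{*}$, since both sides send $N_2^{*i}x$ to $N_1^{*(i+1)}Xx$, and applying Fuglede--Putnam once more promotes this to $N_1\hat X=\hat X N_2$; uniqueness is forced by density of $\mathcal{D}_2$. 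Finally, for the \emph{moreover} assertion, restrict to $x\in\mathcal{H}_2$ and compute $T_1Xx=N_1Xx=N_1\hat X x=\hat X N_2 x=\hat X T_2 x=XT_2x$, using $T_2x\in\mathcal{H}_2$ at the last step, so $T_1X=XT_2$.

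The only delicate point is the opening norm identity and the consequent reformulation of (2); everything afterwards is bookkeeping, once one realises that it is far more natural to build $\hat X$ as an intertwiner of the \emph{adjoints} $N_i^{*}$, where the defining formula is transparent, and only then to recover $N_1\hat X=\hat X N_2$ via Fuglede--Putnam. One should take care to verify the index symmetry $\sum_{i,j}\langle T^{j}h_i,T^{i}h_j\rangle=\sum_{i,j}\langle T^{i}h_j,T^{j}h_i\rangle$ so that the bilinear form matches the one written in (2) verbatim.
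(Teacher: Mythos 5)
The paper itself gives no proof of this proposition---it is imported verbatim from Embry--Wardrop \cite{embry}---so there is no internal proof to compare against; your argument is correct and is essentially the classical one from that reference: the norm identity $\bigl\|\sum_{i}N^{*i}h_i\bigr\|^2=\sum_{i,j}\langle T^ih_j,T^jh_i\rangle$, the density of $\Span\{N^{*k}h : k\in\mathbb{N},\ h\in\mathcal{H}\}$ (this span reduces $N$ because $NN^{*k}h=N^{*k}Th$ by normality, hence equals $\mathcal{K}$ by minimality), the construction of $\hat{X}$ as an intertwiner of the adjoints on that dense span, and the two applications of Fuglede--Putnam. All the steps you flag (well-definedness and boundedness with $\|\hat{X}\|\leq\sqrt{c}$, uniqueness via density after a Fuglede--Putnam reduction, and the \emph{moreover} computation $T_1Xx=\hat{X}N_2x=XT_2x$ for $x\in\mathcal{H}_2$) check out, so your proposal supplies a correct, self-contained proof of what the paper only cites.
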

We also need the following auxiliary lemma (see \cite{yosh} for the proof).
\begin{lm}\label{pd}
 Let $T\in\mathcal{B}(\mathcal{H})$ be injective quasinormal operator with the polar decomposition $T=V|T|$, and let 
$N\in\mathcal{B}(\mathcal{K})$ be the m.n.e. of $T$ with the polar decomposition $N=U|N|$. Then $U$ is unitary and
\[
 V=U|_\mathcal{H}\   \ and\   \ |T|=|N||_\mathcal{H}.
\]
\end{lm}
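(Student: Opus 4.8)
The plan is to reduce the lemma to three facts and establish them in turn: that $|N|$ leaves $\mathcal{H}$ invariant with $|N|_{\vert\mathcal{H}}=|T|$; that $N$ is injective, which is what makes $U$ unitary; and that $U$ and $V$ agree on $\mathcal{H}$. The only use of injectivity of $T$ at the outset is to upgrade $V$ from a partial isometry to an isometry: since $\ker|T|=\ker T=\{0\}$, the range of $|T|$ is dense, so the initial space $\overline{\mathrm{ran}\,|T|}$ of $V$ is all of $\mathcal{H}$.

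The heart of the argument, and the step I expect to be the main obstacle, is showing that $\mathcal{H}$ reduces $|N|$ and that $|N|_{\vert\mathcal{H}}=|T|$. Here I would exploit quasinormality through a norm identity rather than trying to control $N^{\ast}$ directly. Fix $x\in\mathcal{H}$; then $Tx\in\mathcal{H}$, and since $N$ is normal one has $\|N^{\ast}Tx\|^{2}=\|NTx\|^{2}=\|T^{2}x\|^{2}$, using $N(Tx)=T(Tx)=T^{2}x$. On the other hand, because $P_{\mathcal H}N^{\ast}|_{\mathcal{H}}=(N|_{\mathcal{H}})^{\ast}=T^{\ast}$, the component of $N^{\ast}Tx$ along $\mathcal{H}$ satisfies $\|P_{\mathcal H}N^{\ast}Tx\|=\|T^{\ast}Tx\|$. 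Quasinormality gives $T^{\ast 2}T^{2}=T^{\ast}|T|^{2}T=(T^{\ast}T)|T|^{2}=|T|^{4}=(T^{\ast}T)^{2}$, so $\|T^{2}x\|=\|T^{\ast}Tx\|$; comparing the two displays forces $N^{\ast}Tx=P_{\mathcal H}N^{\ast}Tx\in\mathcal{H}$. As $|N|^{2}x=N^{\ast}Nx=N^{\ast}Tx$, this shows $|N|^{2}$ leaves $\mathcal{H}$ invariant and, after projecting, that $|N|^{2}|_{\mathcal{H}}=|T|^{2}$. Since $|N|^{2}$ is self-adjoint, an invariant subspace is automatically reducing, so $\mathcal{H}$ reduces $|N|^{2}$ and hence its square root $|N|$; restricting the square root yields $|N|_{\vert\mathcal{H}}=(|T|^{2})^{1/2}=|T|$.

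With this in hand the remaining two claims are short. For $U$ to be unitary it suffices that $N$ be injective, for then $\overline{\mathrm{ran}\,|N|}=\mathcal{K}$ and, $N$ being normal, its polar isometry $U$ is unitary. I would use minimality in the form $\mathcal{K}=\bigvee_{n\ge 0}N^{\ast n}\mathcal{H}$: if $k\in\ker N=\ker N^{\ast}=\ker|N|$, then $\langle k,N^{\ast n}h\rangle=\langle N^{n}k,h\rangle=0$ for all $n\ge 1$, while $P_{\mathcal H}$ commutes with $|N|$ (as $\mathcal{H}$ reduces it), so $P_{\mathcal H}k\in\ker|N|\cap\mathcal{H}$, where $|N|$ acts as $|T|$; injectivity of $T$ then gives $P_{\mathcal H}k=0$, so $k\perp\mathcal{H}$ as well. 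Hence $k\perp\mathcal{K}$ and $k=0$. Finally, for $x\in\mathcal{H}$ we compute $U|N|x=Nx=Tx=V|T|x=V|N|x$, so $U$ and $V$ coincide on $\mathrm{ran}\,|T|$, which is dense in $\mathcal{H}$; by continuity $U|_{\mathcal{H}}=V$, which completes the proof. As a sanity check one can confirm the statement on Brown's model $T=A\otimes S$, whose minimal normal extension is $A\otimes B$ with $B$ the bilateral shift, giving $U=I\otimes B$ unitary and $|N|=A\otimes I$ restricting to $|T|$.
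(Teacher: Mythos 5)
Your proof is correct, but there is nothing internal to compare it with: the paper states Lemma \ref{pd} without proof, deferring entirely to Yoshino \cite{yosh}, so your self-contained argument is a genuine addition relative to the text (Yoshino's own treatment goes through his study of commuting extensions; your route needs nothing beyond polar decompositions, Pythagoras, and the definition of minimality). The mechanism holding your proof together is the norm-equality trick: quasinormality gives exactly $T^{\ast 2}T^{2}=T^{\ast}|T|^{2}T=T^{\ast}T|T|^{2}=|T|^{4}=(T^{\ast}T)^{2}$, hence $\|T^{2}x\|=\|T^{\ast}Tx\|$, and combined with $\|N^{\ast}Tx\|=\|NTx\|=\|T^{2}x\|$ and $P_{\mathcal H}N^{\ast}|_{\mathcal H}=T^{\ast}$ the Pythagorean identity forces $N^{\ast}Tx=T^{\ast}Tx\in\mathcal{H}$, so $|N|^{2}x=N^{\ast}Nx=N^{\ast}Tx=|T|^{2}x$; this is the step that makes everything else routine, and it checks out. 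The remaining ingredients are all sound: an invariant subspace of the self-adjoint $|N|^{2}$ is automatically reducing, and uniqueness of the positive square root transfers the block-diagonal structure to $|N|$, giving $|N|_{\vert\mathcal H}=|T|$; the minimality formula $\mathcal{K}=\bigvee_{n\geq 0}N^{\ast n}\mathcal{H}$ is the standard one (the right-hand side reduces $N$ because $NN^{\ast n}h=N^{\ast n}Th$ by normality), and your use of it together with $\ker N=\ker N^{\ast}=\ker|N|$, the commutation of $P_{\mathcal H}$ with $|N|$, and injectivity of $|T|$ correctly yields $\ker N=\{0\}$, whence the polar isometry of an injective normal operator is unitary; finally $U|T|x=U|N|x=Nx=Tx=V|T|x$ on the dense subspace $\mathrm{ran}|T|$ closes the identification $V=U|_{\mathcal H}$. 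It is also worth noting that you use the injectivity hypothesis exactly three times (isometry of $V$, injectivity of $N$, density of $\mathrm{ran}|T|$), which matches its role in the statement, and your sanity check against Brown's model $A\otimes S$ with minimal normal extension $A\otimes U$ agrees with the paper's Section 5.
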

Now we prove the main theorem of this section.
\begin{thm}\label{gener}
 Let $T_i\in\mathcal{B}(\mathcal{H}_i)$ be injective quasinormal operator 
 with the polar decomposition $T_i=V_i|T_i|$ and let 
$N_i\in\mathcal{B}(\mathcal{K}_i)$, be the m.n.e. of $T_i$ with the polar decomposition $N_i=U_i|N_i|$, $i=1,2$. 
If $X\in\mathcal{B}(\mathcal{H}_2,\mathcal{H}_1)$, then the following are equivalent :
\begin{enumerate}
 \item $X$ has a (unique) extension $\hat{X}\in\mathcal{B}(\mathcal{K}_2,\mathcal{K}_1)$ such that $N_1\hat{X}=\hat{X}N_2$.
 \item $V_1 X=XV_2$ and $|T_1|X=X|T_2|$.
\end{enumerate}
\end{thm}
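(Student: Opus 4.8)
The plan is to prove the two implications separately, using Lemma \ref{pd} to link the polar factors of $T_i$ with those of the minimal normal extensions $N_i$, the Fuglede--Putnam theorem for the direct implication, and a direct construction of $\hat{X}$ for the converse. Throughout I exploit that, since $N_i$ is normal, its polar factors commute ($U_i|N_i|=|N_i|U_i$), that $U_i$ is unitary with $U_i\mathcal{H}_i\subseteq\mathcal{H}_i$, $U_i|_{\mathcal{H}_i}=V_i$ and $|N_i||_{\mathcal{H}_i}=|T_i|$ by Lemma \ref{pd}, and that injectivity of $T_i$ forces $N_i$ to be injective, so $|N_i|$ has dense range.

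For (1)$\Rightarrow$(2): assume $\hat{X}$ extends $X$ with $N_1\hat{X}=\hat{X}N_2$. Since $N_1,N_2$ are normal, Fuglede--Putnam gives $N_1^{*}\hat{X}=\hat{X}N_2^{*}$, whence $|N_1|^2\hat{X}=N_1^{*}N_1\hat{X}=\hat{X}N_2^{*}N_2=\hat{X}|N_2|^2$; approximating $t\mapsto\sqrt{t}$ uniformly by polynomials on an interval containing the spectra of $|N_1|^2$ and $|N_2|^2$ upgrades this to $|N_1|\hat{X}=\hat{X}|N_2|$. Feeding this into $N_1\hat{X}=\hat{X}N_2$ via $N_i=U_i|N_i|$ yields $(U_1\hat{X}-\hat{X}U_2)|N_2|=0$, so $U_1\hat{X}=\hat{X}U_2$ because $|N_2|$ has dense range. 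Restricting both identities to $\mathcal{H}_2$ then finishes: for $x\in\mathcal{H}_2$ one has $U_2x=V_2x\in\mathcal{H}_2$ and $Xx\in\mathcal{H}_1$, so $V_1Xx=U_1Xx=\hat{X}U_2x=XV_2x$ and likewise $|T_1|Xx=X|T_2|x$, which is exactly (2).

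For (2)$\Rightarrow$(1): first I record that $\mathcal{K}_i=\bigvee_{n\ge0}U_i^{*n}\mathcal{H}_i$. Indeed the right-hand side contains $\mathcal{H}_i$ and reduces both $U_i$ (as $U_i\mathcal{H}_i\subseteq\mathcal{H}_i$) and $|N_i|$ (as $|N_i|$ commutes with $U_i$ and $|N_i|\mathcal{H}_i=|T_i|\mathcal{H}_i\subseteq\mathcal{H}_i$), hence reduces $N_i=U_i|N_i|$, so minimality of the m.n.e. forces equality. Moreover the subspaces $U_i^{*n}\mathcal{H}_i$ increase with $n$, since $\mathcal{H}_i=U_i^{*}(U_i\mathcal{H}_i)\subseteq U_i^{*}\mathcal{H}_i$. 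I then define $\hat{X}$ on the dense subspace $\bigcup_{n}U_2^{*n}\mathcal{H}_2$ by $\hat{X}(U_2^{*n}h)=U_1^{*n}Xh$. Consistency across different $n$ is exactly where $V_1X=XV_2$ enters: if $U_2^{*n}h=U_2^{*m}h'$ with $m>n$ then $h'=U_2^{m-n}h\in\mathcal{H}_2$ and $XU_2^{m-n}h=XV_2^{m-n}h=V_1^{m-n}Xh=U_1^{m-n}Xh$, so the two prescriptions agree. Boundedness is then immediate, and this is the conceptual heart of the argument: as the defining subspaces are nested and $U_1^{*N}$ is isometric, every $\xi=U_2^{*N}h$ obeys $\|\hat{X}\xi\|=\|U_1^{*N}Xh\|=\|Xh\|\le\|X\|\,\|U_2^{*N}h\|=\|X\|\,\|\xi\|$, so $\hat{X}$ extends continuously to $\mathcal{K}_2$ with $\|\hat{X}\|=\|X\|$.

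It remains to check that this $\hat{X}$ works. On the generators $U_2^{*n}h$ I verify $U_1\hat{X}=\hat{X}U_2$ (telescoping the index, using $V_1X=XV_2$ in the case $n=0$) and $|N_1|\hat{X}=\hat{X}|N_2|$ (using that $|N_i|$ commutes with $U_i$ and restricts to $|T_i|$ on $\mathcal{H}_i$, together with $|T_1|X=X|T_2|$); multiplying gives $N_1\hat{X}=U_1|N_1|\hat{X}=\hat{X}U_2|N_2|=\hat{X}N_2$, while the case $n=0$ shows $\hat{X}$ extends $X$. Uniqueness follows because any such $\hat{X}$ must, by Fuglede--Putnam, satisfy $\hat{X}N_2^{*n}h=N_1^{*n}Xh$ on the generating set. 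The only delicate point is the boundedness of $\hat{X}$: an alternative route through Embry's criterion (Proposition \ref{cnas}) is available but less transparent, since there the required domination of one Gram form by $\|X\|^2$ times the other is not visible term by term; the clean estimate above succeeds precisely because $\mathcal{K}_i$ is generated by the nested, isometric copies $U_i^{*n}\mathcal{H}_i$ of $\mathcal{H}_i$.
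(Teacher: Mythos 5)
Your proof is correct, and while your $(1)\Rightarrow(2)$ argument coincides with the paper's (Fuglede--Putnam, then $|N_1|\hat{X}=\hat{X}|N_2|$ and $U_1\hat{X}=\hat{X}U_2$, then restriction to $\mathcal{H}_2$ via Lemma \ref{pd}), your $(2)\Rightarrow(1)$ takes a genuinely different route. The paper never constructs $\hat{X}$: it verifies Embry's criterion (Proposition \ref{cnas}), i.e.\ the Gram-form inequality
$\sum_{i,j=0}^n\langle T_1^iXx_j,T_1^jXx_i\rangle\leq \|X\|^2\sum_{i,j=0}^n\langle T_2^ix_j,T_2^jx_i\rangle$,
working in auxiliary spaces $\mathcal{K}_i'$ carrying the minimal unitary extensions $U_i$ of the isometries $V_i$, rewriting the left side as $\|\sum_i U_1^{*i}X|T_2|^ix_i\|^2$ and sliding powers via $U_1^{*i}X|T_2|^ix_i=U_1^{*n}XU_2^{n}U_2^{*i}|T_2|^ix_i$; the extension then comes out of Proposition \ref{cnas} as a black box. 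You instead build $\hat{X}$ explicitly on the dense subspace $\bigcup_n U_2^{*n}\mathcal{H}_2$ (with $U_i$ now the unitary polar factor of $N_i$ from Lemma \ref{pd}) by $\hat{X}(U_2^{*n}h)=U_1^{*n}Xh$, proving density by a minimality argument and getting $\|\hat{X}\|=\|X\|$ and uniqueness for free. What each buys: your construction is self-contained, works directly on $\mathcal{K}_1,\mathcal{K}_2$, and yields an explicit formula with the exact norm; the paper's route avoids having to prove $\bigvee_n U_2^{*n}\mathcal{H}_2=\mathcal{K}_2$ but imports Embry's nontrivial result. Note that your identity $\hat{X}=U_1^{*n}XU_2^{n}$ on $U_2^{*n}\mathcal{H}_2$ is essentially the same ``shift by $U^{*n}$'' mechanism the paper uses inside its Gram-form estimate, so the two proofs share their combinatorial core.

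One point to tighten: you assert without proof that injectivity of $T_i$ forces $N_i$ to be injective, and you use the consequent dense range of $|N_2|$ to cancel $|N_2|$ in $(U_1\hat{X}-\hat{X}U_2)|N_2|=0$. This is true but deserves a line: since $|N_i|{\restriction}_{\mathcal{H}_i}=|T_i|$ is injective, positive and self-adjoint on $\mathcal{H}_i$, one has $\mathcal{H}_i=\overline{|T_i|\mathcal{H}_i}\subseteq\overline{\mathrm{Im}\,|N_i|}=(\ker N_i)^{\perp}$, and $(\ker N_i)^{\perp}$ reduces $N_i$, so minimality of the extension forces $\ker N_i=\{0\}$. (For the restriction step alone, the inclusion $\mathcal{H}_2\subseteq\overline{\mathrm{Im}\,|N_2|}$ already suffices.) The paper silently uses the same fact in its own $(1)\Rightarrow(2)$ (``we easily get''), so this is a gap of detail, not of substance.
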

\begin{proof}
$(1)\Rightarrow (2)$. Let $\hat{X}\in\mathcal{B}(\mathcal{K}_2,\mathcal{K}_1)$ be an extension of $X$ such that $N_1\hat{X}=\hat{X}N_2$.
Then by using Fuglede-Putnam theorem, we get $N_1^*\hat{X}=\hat{X}N_2^*$.
Hence, we easily get  
\[
 |N_1|\hat{X}=\hat{X}|N_2|\ \ and\ \ U_1\hat{X}=\hat{X}U_2.
\]
By Lemma \ref{pd}, for all $x\in\mathcal{H}_2$
\[
 V_1 Xx=V_1\hat{X}x=U_1\hat{X}x=\hat{X}U_2x=\hat{X}V_2x=XV_2x,
\]
and 
\[
 |T_1|Xx=|T_1|\hat{X}x=|N_1|\hat{X}x=\hat{X}|N_2|x=\hat{X}|T_2|x=X|T_2|x.
\]
$(2)\Rightarrow (1)$. Let $U_i\in\mathcal{B}(\mathcal{K}_i^\prime)$ be the minimal unitary extension of $V_i$. Then for any finite set 
$\{x_0,...,x_n\}$ in $\mathcal{H}_2$
\begin{align*}
 \sum_{i,j=0}^n&<T_1^iXx_j,T_1^jXx_i>=\sum_{i,j=0}^n<V_1^i|T_1|^iXx_j,V_1^j|T_1|^jXx_i>\\
 =&\sum_{i,j=0}^n<V_1^iX|T_2|^jx_j,V_1^jX|T_2|^ix_i>=\sum_{i,j=0}^n<U_1^iX|T_2|^jx_j,U_1^jX|T_2|^ix_i>\\
 =&\sum_{i,j=0}^n<U_1^{*j}X|T_2|^jx_j,U_1^{*i}X|T_2|^ix_i>=||\sum_{i=0}^nU_1^{*i}X|T_2|^ix_i||_{\mathcal{K}_1^\prime}^2.
\end{align*}
Since for all $k\geq0$
\begin{align*}
 U_1^{*i}X|T_2|^ix_i&=U_1^{*i+k}U_1^kX|T_2|^ix_i=U_1^{*i+k}XV_2^k|T_2|^ix_i\\
 &=U_1^{*i+k}XU_2^k|T_2|^ix_i=U_1^{*i+k}XU_2^{i+k}U_2^{*i}|T_2|^ix_i,
\end{align*}
for all $i=0,...,n$, we have, by choosing $k$ such that $i+k=n$ for each $i$
\begin{align*}
 ||\sum_{i=0}^n&U_1^{*i}X|T_2|^ix_i||_{\mathcal{K}_1^\prime}^2=
 ||U_1^{*n}XU_2^{n}\sum_{i=0}^nU_2^{*i}|T_2|^ix_i||_{\mathcal{K}_1^\prime}^2\\
 =&||XU_2^{n}\sum_{i=0}^nU_2^{*i}|T_2|^ix_i||_{\mathcal{K}_1^\prime}^2=
 ||XU_2^{n}\sum_{i=0}^nU_2^{*i}|T_2|^ix_i||_{\mathcal{H}_1}^2\\
 \leq& ||X||_{\mathcal{H}_1}^2||\sum_{i=0}^nU_2^{*i}|T_2|^ix_i||_{\mathcal{K}^{\prime}_1}^2
 =||X||_{\mathcal{H}_1}^2\sum_{i,j=0}^n<U_2^i|T_2|^jx_j,U_2^j|T_2|^ix_i>\\
 =&||X||_{\mathcal{H}_1}^2\sum_{i,j=0}^n<|T_2|^jV_2^ix_j,|T_2|^iV_2^jx_i>
 =||X||_{\mathcal{H}_1}^2\sum_{i,j=0}^n<T_2^ix_j,T_2^jx_i>
\end{align*}
Hence, Proposition \ref{cnas} implies the first assertion. The proof is now complete.
\end{proof}
Now, let $A$ be a positive operator, and denote by $U$ the bilateral shift, 
then $A\otimes U$ is the m.n.e. of $A\otimes S$, and the last theorem implies the following corollary.
\begin{coro}\label{equiv}
  Let $A$ be an injective positive operator on a Hilbert space $\mathfrak{L}$, and let $X$ 
  be a bounded operator on $\mathfrak{L}\otimes H^2$, then the following are equivalent
\begin{enumerate}
 \item $X$ has a (unique) extension $\hat{X}\in\mathcal{B}(\mathfrak{L}\otimes L^2)$ 
 such that\\ $(A\otimes U)\hat{X}=\lambda\hat{X}(A\otimes U)$.
 \item $(A\otimes I)X=|\lambda|X(A\otimes I)$ and $(I\otimes S)X=\lambda/|\lambda| X(I\otimes S)$. 
\end{enumerate}
\end{coro}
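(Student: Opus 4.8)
The plan is to deduce this corollary directly from Theorem \ref{gener}, by choosing the two intertwined quasinormal operators so that the scalar $\lambda$ gets absorbed into one of them and the twisted equation becomes an honest intertwining. First I would dispose of the degenerate case $\lambda=0$: since $A\otimes S$ is injective, the operators $A\otimes U$ and $I\otimes S$ are injective, so each of the two assertions forces $X=\hat{X}=0$, and they are trivially equivalent. From now on I assume $\lambda\neq0$.

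Set $T_1=A\otimes S$ and $T_2=\lambda(A\otimes S)$, both acting on $\mathfrak{L}\otimes H^2$. The operator $T_1$ is injective quasinormal, with polar decomposition $T_1=V_1|T_1|$ given by $V_1=I\otimes S$ (an isometry, since $A\otimes I$ has dense range) and $|T_1|=A\otimes I$; its minimal normal extension is $N_1=A\otimes U$ on $\mathfrak{L}\otimes L^2$, with $N_1=U_1|N_1|$ where $U_1=I\otimes U$ is unitary and $|N_1|=A\otimes I$. For $T_2$ I would check that multiplying a quasinormal operator by the nonzero scalar $\lambda$ keeps it injective and quasinormal: one computes $|T_2|=|\lambda|(A\otimes I)$ and $V_2=\frac{\lambda}{|\lambda|}(I\otimes S)$, so $V_2$ is again an isometry and $T_2|T_2|=|T_2|T_2$. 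Likewise, scaling by $\lambda\neq0$ preserves both normality of the extension and minimality (the reducing subspaces of $\lambda N$ coincide with those of $N$), so the m.n.e.\ of $T_2$ is $N_2=\lambda(A\otimes U)$, with $U_2=\frac{\lambda}{|\lambda|}(I\otimes U)$ unitary and $|N_2|=|\lambda|(A\otimes I)$. By Lemma \ref{pd} these data are consistent, since $U|_{H^2}=S$ gives $V_i=U_i|_{\mathfrak{L}\otimes H^2}$ and $|T_i|=|N_i||_{\mathfrak{L}\otimes H^2}$.

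With this setup I would simply translate Theorem \ref{gener} applied to the pair $(T_1,T_2)$. Its assertion (1), that $X$ admits a unique extension $\hat{X}$ with $N_1\hat{X}=\hat{X}N_2$, reads $(A\otimes U)\hat{X}=\lambda\hat{X}(A\otimes U)$, which is exactly assertion (1) of the corollary. Its assertion (2), namely $V_1X=XV_2$ together with $|T_1|X=X|T_2|$, reads $(I\otimes S)X=\frac{\lambda}{|\lambda|}X(I\otimes S)$ and $(A\otimes I)X=|\lambda|X(A\otimes I)$, which is exactly assertion (2) of the corollary. Hence the equivalence is immediate from Theorem \ref{gener}.

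The only genuinely substantive point is the verification that $T_2=\lambda(A\otimes S)$ fulfils the hypotheses of Theorem \ref{gener}: identifying $N_2=\lambda(A\otimes U)$ as its minimal normal extension, and carrying out the bookkeeping of the two polar decompositions so that the translated intertwining relations line up verbatim with the corollary. Everything else is formal, the key observation being that the passage from the twisted equation $(A\otimes U)\hat{X}=\lambda\hat{X}(A\otimes U)$ to an untwisted intertwining $N_1\hat{X}=\hat{X}N_2$ is effected precisely by moving the scalar $\lambda$ into the operator on the right.
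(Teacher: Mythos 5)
Your proof is correct and takes essentially the paper's own route: the paper obtains Corollary \ref{equiv} directly from Theorem \ref{gener} (together with the identification of $A\otimes U$ as the m.n.e.\ of $A\otimes S$), and your absorption of the scalar via $T_2=\lambda(A\otimes S)$ with m.n.e.\ $N_2=\lambda(A\otimes U)$, plus the polar-decomposition bookkeeping $V_2=\frac{\lambda}{|\lambda|}(I\otimes S)$, $|T_2|=|\lambda|(A\otimes I)$, is exactly the implicit computation the paper relies on. You merely make explicit what the paper leaves unstated (including the harmless $\lambda=0$ degeneracy, which the corollary's $\lambda/|\lambda|$ tacitly excludes anyway).
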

Now, we use similar arguments from the proofs of Theorem \ref{intertwining values} 
and Theorem \ref{eigvect_pure_qn} to establish the following result.
\begin{thm}\label{ext-spectrum-n}
 Let $A$ be an invertible positive operator on a Hilbert space $\mathfrak{L}$, and $N=A\otimes U$. Denote by
$a:=\|A\|\|A^{-1}\|$, then if $ (||A||,||A^{-1}||^{-1})\in \sigma_p(A)^2$ we have
\[
\s{N}=\{z\in\mathbb{C} : \frac{1}{a}\leq |z|\leq a\}, 
\]
else we have
\[
\s{N}=\{z\in\mathbb{C} : \frac{1}{a} < |z| < a\}. 
\]
Moreover, if $\lambda\in\s{N}$ then
\[
E_{ext}(N,\lambda)=\mbox{weak}^{\ast}\mbox{-span}\{(I\otimes U^m) \hat{D}_{A,L,\lambda} : m\in\mathbb{Z}, L\in\hat{\mathcal{A}}_{|\lambda|}\}. 
\]
where $\hat{D}_{A,L,\lambda}$ is the operator defined by 
\begin{equation*}
 (\hat{D}_{A,L,\lambda})_{i,j\in\mathbb{Z}} = \left\{
  \begin{array}{l l}
 \lambda^{-i}A^iLA^{-i}& \quad \text{if $i=j$}\\
 0 & \quad \text{otherwise.}\\
 \end{array} 
\right.
\end{equation*}
and
\[
 \hat{\mathcal{A}}_{|\lambda|}=\{L\in\mathcal{B}(\mathfrak{L}) : \sup_{n\in\mathbb{Z}}||\lambda^{-n}A^nLA^{-n}||<+\infty\}.
\]

\end{thm}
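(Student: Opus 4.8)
The plan is to run the bilateral analogue of the arguments in Theorems \ref{intertwining values} and \ref{eigvect_pure_qn}. First I would identify $L^2(\mathbb{T})$ with $\ell^2(\mathbb{Z})$, so that $\mathfrak{L}\otimes L^2$ becomes $\oplus_{k\in\mathbb{Z}}\mathfrak{L}$ and $N=A\otimes U$ is the doubly-infinite block weighted shift whose only nonzero entries are $N_{i,j}=A$ for $i=j+1$. Writing an extended eigenvector as a block matrix $X=(X_{i,j})_{i,j\in\mathbb{Z}}$, the relation $NX=\lambda XN$ becomes $AX_{i,j}=\lambda X_{i+1,j+1}A$ for all $i,j$. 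Since $N$ is injective we have $\lambda\neq0$, and since $A$ is \emph{invertible} this recurrence can be solved in both directions along each diagonal $i-j=m$, giving $X_{j+m,j}=\lambda^{-j}A^{j}X_{m,0}A^{-j}$ for every $j\in\mathbb{Z}$. This invertibility is precisely what replaces the triangularity forced by the unilateral shift in Theorem \ref{eigvect_pure_qn}, and it is why the index $m$ now ranges over all of $\mathbb{Z}$ and the relevant algebra is the two-sided $\hat{\mathcal{A}}_{|\lambda|}$ rather than the one-sided $\mathcal{A}_{|\lambda|}(A)$.

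Next I would control boundedness diagonal by diagonal. Averaging over the gauge rotations $W_\theta=\mathrm{diag}((e^{ik\theta}I)_{k\in\mathbb{Z}})$ shows that the $m$-th diagonal part $D_m(X)=\frac{1}{2\pi}\int_0^{2\pi}e^{im\theta}W_\theta^{\ast}XW_\theta\,d\theta$ satisfies $\|D_m(X)\|\leq\|X\|$. Hence, for each $m$, $\sup_{j\in\mathbb{Z}}\|\lambda^{-j}A^{j}X_{m,0}A^{-j}\|<+\infty$, i.e. $L:=X_{m,0}\in\hat{\mathcal{A}}_{|\lambda|}$, and a direct entrywise check gives $D_m(X)=(I\otimes U^{m})\hat{D}_{A,L,\lambda}$. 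Conversely, for any $L\in\hat{\mathcal{A}}_{|\lambda|}$ the single-diagonal operator $(I\otimes U^{m})\hat{D}_{A,L,\lambda}$ is bounded and solves $NZ=\lambda ZN$. Finally, exactly as in Theorem \ref{eigvect_pure_qn}, one recovers $X$ as the weak$^{\ast}$ limit of the two-sided Fej\'er means $\sum_{|m|\leq n}\left(1-\frac{|m|}{n+1}\right)D_m(X)$; this yields the announced description of $E_{ext}(N,\lambda)$ and shows that $\lambda\in\s{N}$ if and only if $\hat{\mathcal{A}}_{|\lambda|}\neq\{0\}$.

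It then remains to identify $\{|\lambda|:\hat{\mathcal{A}}_{|\lambda|}\neq\{0\}\}$ with the annulus. For the necessity of $1/a\leq|\lambda|\leq a$ I would argue as in the second step of Theorem \ref{intertwining values}, but exploiting both signs of $n$: from $m_AI\leq A\leq\|A\|I$ and $\sup_{n\in\mathbb{Z}}\|\lambda^{-n}A^{n}LA^{-n}\|=:C<\infty$ one gets $\|Lw\|\leq C(a|\lambda|)^{n}\|w\|$ on letting $n\to+\infty$, and $\|Lw\|\leq C(a/|\lambda|)^{k}\|w\|$ on applying the bound at $n=-k$ and letting $k\to+\infty$; a nonzero $L$ therefore forces $1/a\leq|\lambda|\leq a$. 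The strictness on the two bounding circles is handled by the spectral-measure limits used in Theorem \ref{intertwining values}(2): the endpoints $|\lambda|\in\{1/a,a\}$ survive precisely when the extreme spectral projections $E^{A}(\{\|A\|\})$ and $E^{A}(\{\|A^{-1}\|^{-1}\})$ are both nonzero, which is exactly the hypothesis $(\|A\|,\|A^{-1}\|^{-1})\in\sigma_p(A)^{2}$.

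For the sufficiency I would exhibit nonzero elements of $\hat{\mathcal{A}}_{|\lambda|}$ for every $|\lambda|$ in the annulus, and I expect this to be the main obstacle. At the two endpoints the rank-one operator $u\otimes v$ with $Au=\|A\|u$ and $Av=\|A^{-1}\|^{-1}v$ works, exactly as in Theorem \ref{intertwining values}(1), since then $A^{n}(u\otimes v)A^{-n}=a^{n}(u\otimes v)$; this is where the point-spectrum hypothesis is used. For an interior value the two-sided supremum over $n\in\mathbb{Z}$ is far more rigid than the one-sided condition defining $\mathcal{A}_{|\lambda|}(A)$, and rank-one operators at isolated spectral values no longer suffice. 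The device I would use is a weighted composition operator: in a multiplication model $A=M_t$ on $L^{2}(\sigma(A),\mu)$, set $Lf(t)=w(t)f(t/|\lambda|)$ on the band where $t$ and $t/|\lambda|$ both lie in $\sigma(A)$, so that $A^{n}LA^{-n}=|\lambda|^{n}L$ exactly and hence $\sup_{n\in\mathbb{Z}}\|\lambda^{-n}A^{n}LA^{-n}\|=\|L\|<\infty$, giving $L\in\hat{\mathcal{A}}_{|\lambda|}\setminus\{0\}$. Verifying that such an $L$ is nonzero and bounded for each interior $|\lambda|$ — equivalently, that $\sigma(A)$ is rich enough to support the dilation by $1/|\lambda|$ — is the delicate heart of the argument; combined with the two endpoint cases it completes the identification of $\s{N}$ and, together with the previous paragraphs, the proof. (One may alternatively route the sufficiency through Corollary \ref{equiv}, lifting from $A\otimes S$, but the direct bilateral analysis above keeps the diagonal bookkeeping transparent.)
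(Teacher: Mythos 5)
Your first two paragraphs are correct and are essentially the paper's intended route: the bilateral diagonal analysis giving $X_{m+j,j}=\lambda^{-j}A^{j}X_{m,0}A^{-j}$, the Fej\'er-mean recovery of $X$ from its diagonals, the resulting equivalence $\lambda\in\s{N}\Leftrightarrow\hat{\mathcal{A}}_{|\lambda|}\neq\{0\}$, the two-sided necessity argument forcing $1/a\leq|\lambda|\leq a$, and the endpoint treatment (rank-one eigenvector tensors $u\otimes v$, $v\otimes u$ when $(\|A\|,\|A^{-1}\|^{-1})\in\sigma_p(A)^2$, spectral-measure limits otherwise) all match the paper, whose printed proof is in fact only a sketch deferring to Theorems \ref{intertwining values} and \ref{eigvect_pure_qn} plus exactly this endpoint discussion.

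The genuine gap is precisely where you flagged it, and it cannot be closed: the claim that $\hat{\mathcal{A}}_{|\lambda|}\neq\{0\}$ for \emph{every} interior $1/a<|\lambda|<a$ is false for a general invertible positive $A$. Take $A=I_{\mathcal{H}_1}\oplus 2I_{\mathcal{H}_2}$, so $a=2$ and both $\|A\|=2$ and $\|A^{-1}\|^{-1}=1$ are eigenvalues. Writing $L$ in $2\times 2$ block form, $\lambda^{-n}A^{n}LA^{-n}$ has blocks $\lambda^{-n}L_{11}$, $(2\lambda)^{-n}L_{12}$, $(2/\lambda)^{n}L_{21}$, $\lambda^{-n}L_{22}$, and two-sided boundedness over $n\in\mathbb{Z}$ forces $L_{11}=L_{22}=0$ unless $|\lambda|=1$, $L_{12}=0$ unless $|\lambda|=1/2$, and $L_{21}=0$ unless $|\lambda|=2$. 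Hence $\hat{\mathcal{A}}_{3/2}=\{0\}$, and by your own (correct) equivalence $3/2\notin\s{N}$, although $3/2$ lies in the closed annulus the theorem asserts; directly, $N\cong U\oplus 2U$ and $VYV^{-1}=\mu Y$ with $V$ unitary forces $|\mu|=1$, so here $\s{N}$ is the union of the three circles of radii $1/2,1,2$. The structural reason no construction can succeed is the one you sensed: your weighted composition operator needs a set of positive spectral measure $\delta\subseteq\sigma(A)$ with $|\lambda|\delta\subseteq\sigma(A)$, and nothing in the hypotheses supplies it; likewise the $\varepsilon$-band rank-one operators from the first step of Theorem \ref{intertwining values} only land in one-sided classes (with $b_1$ supported in $E^{A}([s-\varepsilon,s])$ and $b_2$ in $E^{A}([t-\varepsilon,t])$, boundedness as $n\to+\infty$ needs $|\lambda|\geq s/(t-\varepsilon)$ while $n\to-\infty$ needs $|\lambda|\leq (s-\varepsilon)/t$, which is vacuous for $\varepsilon>0$), and the monotonicity $\mathcal{A}_{r}\subseteq\mathcal{A}_{r'}$ for $r\leq r'$, which sweeps out all large radii in the unilateral case, is destroyed by the supremum over $n\in\mathbb{Z}$. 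Note that the paper's own text never addresses interior radii either — it only treats the two extreme circles — so you have reproduced, not missed, its reasoning; the statement is salvageable only under a richness hypothesis on $\sigma(A)$ (e.g.\ $\sigma(A)=[\|A^{-1}\|^{-1},\|A\|]$, where your composition-operator device does work, as in \cite{casalk} for normal operators), or with the spectral conclusion restated as $\s{N}=\{\lambda\neq 0:\hat{\mathcal{A}}_{|\lambda|}\neq\{0\}\}$, which is what your argument actually proves together with the description of $E_{ext}(N,\lambda)$.
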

Indeed, if $ (||A||,||A^{-1}||^{-1})\in \sigma_p(A)^2$ it suffices to consider $L_1=u \otimes v$ and $L_2=v \otimes u$ 
where $u$ and $v$ are
eigenvectors of $A$ associated with $||A||$ and $||A^{-1}||^{-1}$ respectively.\\ 
Else, we use the inequality
\[
 |<Lx,y>|
  \leq C||\left(\frac{A}{||A||}\right)^m x||||\left(\frac{A^{-1}}{||A^{-1}||}\right)^m y||
\]
(which is available for any $L \in \hat{\mathcal{A}}_{a}$ and any $m \in \mathbb{N}$), in order to show that $L$ is necessarily
null (see proof of Theorem \ref{intertwining values}). We proceed similarly for proving that
$\hat{\mathcal{A}}_{a^{-1}}=\{0\}$.
\begin{thm}
 Let $A$ be an invertible positive operator on a Hilbert space $\mathfrak{L}$, $T=A\otimes S$ 
 and $N=A\otimes U$. Let $\lambda \in \s{N}$ and $X\in \mathcal{B}(\mathfrak{L}\otimes H^2)$, then $X$ has a (unique) 
extension $\hat{X}\in\mathcal{B}(\mathfrak{L}\otimes L^2)$ such that
\[
 N\hat{X}=\lambda\hat{X}N,
\] 
if and only if 
\[
X\in \mbox{weak}^{\ast}\mbox{-span}\{(I\otimes S^n) D_{A,L,\lambda} : n\in\mathbb{N}, L\in E_{ext}(A,|\lambda|)\}. 
\]
\end{thm}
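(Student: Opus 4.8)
The plan is to reduce the statement to the matricial bookkeeping already used in the proof of Theorem \ref{eigvect_pure_qn}, via Corollary \ref{equiv}. By that corollary, $X\in\mathcal{B}(\mathfrak{L}\otimes H^2)$ admits a (unique) extension $\hat{X}$ satisfying $N\hat{X}=\lambda\hat{X}N$ if and only if
\[
(A\otimes I)X=|\lambda|\,X(A\otimes I)\qquad\text{and}\qquad(I\otimes S)X=\frac{\lambda}{|\lambda|}\,X(I\otimes S).
\]
Writing $\mu=\lambda/|\lambda|$ (a unimodular scalar) and identifying $\mathfrak{L}\otimes H^2$ with $\oplus_{k=0}^{+\infty}\mathfrak{L}$, I represent $X$ by a block matrix $(X_{i,j})_{i,j\ge 0}$ with $X_{i,j}\in\mathcal{B}(\mathfrak{L})$, and the whole proof consists in reading off what these two relations impose on the blocks.

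First I would translate the two relations. As $A\otimes I$ is diagonal, the first one reads $AX_{i,j}=|\lambda|X_{i,j}A$ for every $i,j$, that is, each block lies in $E_{ext}(A,|\lambda|)$. Writing the second one blockwise gives $X_{0,m}=0$ for $m\ge 1$ together with the recursion $X_{i,j+1}=\mu^{-1}X_{i-1,j}$ for $i\ge 1$. Iterating the recursion shows that $X$ is lower triangular and that its $m$-th subdiagonal is entirely determined by its first column,
\[
X_{j+m,j}=\mu^{-j}X_{m,0},\qquad m\ge 0,\ j\ge 0,
\]
with each $X_{m,0}\in E_{ext}(A,|\lambda|)$. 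So the admissible operators $X$ are exactly the lower triangular matrices with subdiagonals of this form.

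The second step is to recognise the building blocks. For $L\in E_{ext}(A,|\lambda|)$ one has $A^iLA^{-i}=|\lambda|^iL$, hence $(D_{A,L,\lambda})_{i,i}=\lambda^{-i}|\lambda|^iL=\mu^{-i}L$, and a direct computation shows that $(I\otimes S^n)D_{A,L,\lambda}$ is supported on the $n$-th subdiagonal with entry $\mu^{-j}L$ in position $(j+n,j)$. The same relation $A^iLA^{-i}=|\lambda|^iL$ yields the identity $(I\otimes S)D_{A,L,\lambda}=\mu\,D_{A,L,\lambda}(I\otimes S)$, so each such block satisfies both intertwining relations (the first because every nonzero entry is a scalar multiple of $L\in E_{ext}(A,|\lambda|)$). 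Since left and right multiplication by a fixed operator are weak$^{\ast}$-continuous, the two relations cut out a weak$^{\ast}$-closed subspace; hence the weak$^{\ast}$-span on the right-hand side is contained in the set of admissible $X$.

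For the converse inclusion I would, given an admissible $X$, set $L_m:=X_{m,0}\in E_{ext}(A,|\lambda|)$ and observe that $(I\otimes S^m)D_{A,L_m,\lambda}$ reproduces exactly the $m$-th subdiagonal of $X$. Thus, entrywise, $X$ is the formal sum of these blocks, and the Fej\'er--Ces\`aro averaging used in Theorem \ref{eigvect_pure_qn},
\[
X=\underset{n\rightarrow +\infty}{weak^{\ast}lim}\ \sum_{m=0}^{n}\Bigl(1-\frac{m}{n+1}\Bigr)(I\otimes S^m)D_{A,L_m,\lambda},
\]
places $X$ in the required weak$^{\ast}$-span. The main obstacle is precisely this last convergence: one must check that the Ces\`aro means stay uniformly bounded in operator norm so that the weak$^{\ast}$-limit exists and recovers $X$. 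This is handled exactly as the Toeplitz-type estimate in the proof of Theorem \ref{eigvect_pure_qn}, the diagonal blocks $\mu^{-i}L_0$ being controlled by $\|X\|$; everything else is the routine block algebra indicated above.
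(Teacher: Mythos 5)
Your proposal is correct, but it runs the argument in the opposite order from the paper, and the difference is worth recording. The paper starts from the extension $\hat{X}$ on $\mathfrak{L}\otimes L^2$ and performs bilateral Fourier analysis there: it conjugates by the gauge unitaries $\hat{J}_{e^{i\theta}}$, extracts the diagonals $\hat{X}(m)$, $m\in\mathbb{Z}$, as Bochner integrals, observes that each satisfies $N\hat{X}(m)=\lambda\hat{X}(m)N$, and then invokes Theorem \ref{ext-spectrum-n} to write $\hat{X}(m)=(I\otimes U^m)\hat{D}_{A,L,\lambda}$ with $L\in\hat{\mathcal{A}}_{|\lambda|}$; the diagonals with $m<0$ are excluded by noting that such an $\hat{X}(m)$ does not leave $\mathfrak{L}\otimes H^2$ invariant and so cannot extend anything, and only at the last step is Corollary \ref{equiv} used, to upgrade $L\in\hat{\mathcal{A}}_{|\lambda|}$ to the exact relation $AL=|\lambda|LA$; the converse inclusion is left to the reader. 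You instead apply Corollary \ref{equiv} at the outset, which converts the extension problem into the two unilateral relations $(A\otimes I)X=|\lambda|X(A\otimes I)$ and $(I\otimes S)X=\mu X(I\otimes S)$, and you solve these by elementary block recursion: lower triangularity and the subdiagonal formula $X_{j+m,j}=\mu^{-j}X_{m,0}$ with $X_{m,0}\in E_{ext}(A,|\lambda|)$ fall out directly (your index bookkeeping checks out, as does the identity $(I\otimes S)D_{A,L,\lambda}=\mu D_{A,L,\lambda}(I\otimes S)$ for $L\in E_{ext}(A,|\lambda|)$, which rests on $A^iLA^{-i}=|\lambda|^iL$). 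This makes the paper's $m<0$ exclusion automatic, dispenses with Theorem \ref{ext-spectrum-n} and the Bochner-integral machinery entirely, and — a genuine plus — supplies the converse inclusion explicitly via the weak$^{\ast}$-closedness of the solution set of the two relations, which the paper only asserts is ``easy''. Both routes pay the same outstanding debt, namely the Ces\`aro--Fej\'er weak$^{\ast}$ summation $X=\mbox{weak}^{\ast}\mbox{-}\lim_n\sum_{m=0}^{n}\bigl(1-\frac{m}{n+1}\bigr)(I\otimes S^m)D_{A,L_m,\lambda}$, which the paper also states without proof in Theorem \ref{eigvect_pure_qn}; you correctly identify the needed uniform bound $\bigl\Vert\sum_{m=0}^{n}\bigl(1-\frac{m}{n+1}\bigr)X(-m)\bigr\Vert\leq\Vert X\Vert$ (all blocks satisfy $\Vert X_{m,0}\Vert\leq\Vert X\Vert$, not just $L_0$, which is what you should say there), obtained from the positivity and unit mass of the Fej\'er kernel in the averaged-conjugation representation. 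What the paper's longer route buys is the explicit correspondence between the diagonals of $\hat{X}$ on $\mathfrak{L}\otimes L^2$ and the structure of $E_{ext}(N,\lambda)$, which is exactly what feeds the four-case Remark that follows the theorem; your route is more self-contained but produces that correspondence only implicitly.
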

\begin{proof}
Let $\hat{X}=(\hat{X}_{i,j})_{i,j\in\mathbb{Z}}$ be an operator acting on $\mathfrak{L}\otimes L^2$ 
such that
\begin{equation}\label{eq1}
 N\hat{X}=\lambda\hat{X}N,
\end{equation}
Consider for all $\alpha\in\mathbb{T}$, the operator $\hat{J}_\alpha$ whose the matrix in $\mathfrak{L}\otimes L^2$
is defined by
\begin{equation*}
 (\hat{J}_\alpha)_{i,j} = \left\{
  \begin{array}{l l}
 \alpha^iI & \quad \text{if $i=j$}\\
 0 & \quad \text{otherwise.}\\
  \end{array} \right.
\end{equation*}
Then one can verify that $\hat{J}_\alpha N=\alpha N\hat{J}_\alpha$. Hence, let $\alpha,\beta\in\mathbb{T}$, 
if we apply to both 
sides of the Equation (\ref{eq1}) the operator $\hat{J}_\alpha$ from the left, 
and the operator $\hat{J}_\beta$ from the right, we get
\[
N\hat{J}_\alpha\hat{X}\hat{J}_\beta =\frac{\lambda}{\alpha\beta} \hat{J}_\alpha\hat{X}\hat{J}_\beta N.
\]
Now, let $m\in\mathbb{Z}$, $\theta\in[0,2\pi]$ and put $\alpha=\beta^{-1}=e^{i\theta}$, then the last equation implies
\[
N\int_0^{2\pi}e^{-im\theta}\hat{J}_{e^{i\theta}}\hat{X}\hat{J}_{e^{-i\theta}}dm(\theta)=
\lambda\int_0^{2\pi}e^{-im\theta}\hat{J}_{e^{i\theta}}\hat{X}\hat{J}_{e^{-i\theta}} dm(\theta) N,
\]
where the integrals are well defined in Bochner sense.
Hence
\[
N\hat{X}(m)=\lambda\hat{X}(m) N.
\]
where $\hat{X}(m)$ is the operator acting on $\mathfrak{L}\otimes L^2$ whose the matrix is given by 
\begin{equation*}
 (\hat{X}(m))_{i,j} = \left\{
  \begin{array}{l l}
 \hat{X}_{i,j} & \quad \text{if $i=m+j$}\\
 0 & \quad \text{otherwise,}\\
  \end{array} \right.
\end{equation*}
On the other hand, one can easily verify that $\hat{J}_{e^{i\theta}}\hat{X}\hat{J}_{e^{-i\theta}}$ is an extension of 
the operator $J_{e^{i\theta}}XJ_{e^{-i\theta}}$, so that $\hat{X}(m)$ 
is an extension of $X(m)=(I\otimes S^m) D_{A,L,\lambda}$.
Also, by using the last theorem, there exists $L\in\hat{\mathcal{A}}_{|\lambda|}$ such that
\[
\hat{X}(m)=(I\otimes U^m) \hat{D}_{A,L,\lambda}.
\]
Now, suppose that $m<0$. In this case, $L=0$. Indeed, if $L\neq 0$, then
$\mathfrak{L}\otimes H^2\notin\mathcal{L}at(\hat{X}(m))$, which means that there is 
no bounded operator on $\mathfrak{L}\otimes H^2$ for which $\hat{X}(m)$ is an extension.
Now assume that $m\geq0$. Then $\hat{X}(m)$ is an extension of the operator
$X(m)$, and by using the Corollary \ref{equiv}, we have an equivalence with the two following equations
\[
(A\otimes I)X=|\lambda|X(A\otimes I)\ \ and\ \ (I\otimes S)X=\lambda/|\lambda| X(I\otimes S).
\]
One can easily verify that the last equalities are equivalent to
\[
AL=|\lambda|LA.
\]
which means that $L\in E_{ext}(A,|\lambda|)$. 
The converse is easy and will be left to the reader. 
\end{proof}

\begin{rmq}
Let $A$ be an invertible positive operator on a Hilbert space $\mathfrak{L}$ such that
$ (||A||,||A^{-1}||^{-1})\in \sigma_p(A)^2$, $T=A\otimes S$ and $N=A\otimes U$ 
the m.n.e. of $T$. As a direct result of the last theorem, we can summarize the relationship between 
extended eigenvectors of $T$ and $N$ in the four following cases :\\
%\begin{enumerate}
1) If $|\lambda|\in[1/a,a]$ and let 
\[
X=(I\otimes S^n) D_{A,L,\lambda}, \  \ n\in\mathbb{N}.
\]
Suppose that $L\in E_{ext}(A,|\lambda|)$, then $X$ has a (unique) extension $\hat{X}\in E_{ext}(N,\lambda)$.\\
2) With the same hypotheses, if we suppose that $L\notin E_{ext}(A,|\lambda|)$, then $X$ doesn't have 
any extension in $E_{ext}(N,\lambda)$.\\
3) Let $|\lambda|\in[1/a,a]$ and $\hat{X}\in E_{ext}(N,\lambda)\backslash \lbrace 0 \rbrace$ be such that
\[
\hat{X}=(I\otimes U^m) \hat{D}_{A,L,\lambda},\   \ m<0,
\]
then there is no bounded operator on $\mathfrak{L}\otimes H^2$ for which $\hat{X}$ is an extension.\\
4) If $|\lambda|>a$, and let
\[
X=(I\otimes S^n) D_{A,L,\lambda}, \ \ n\in\mathbb{N},\ \ L\in\mathcal{A}_{|\lambda|}\backslash \lbrace 0 \rbrace,
\]
then $X\in E_{ext}(T,\lambda)$, but it has no extension in $E_{ext}(N,\lambda)$.\\
When $ (||A||,||A^{-1}||^{-1})\notin \sigma_p(A)^2$, the reader can adapt this remark by using 
Theorem \ref{ext-spectrum-n} and conclude.
%\end{enumerate}
\end{rmq}
\end{section}

\bibliographystyle{abbrv}
%\bibliographystyle{plain}
%\bibliography{ref2}

\end{document}